\newcommand{\R}{\mathbb R}
\newcommand{\C}{\mathbb{C}}
\newcommand{\Z}{\mathbb{Z}}
\newcommand{\Q}{\mathbb{Q}}
\renewcommand{\P}{\mathbb{P}}
\newcommand{\cA}{\mathcal{A}}
\newtheorem{prop}{Proposition}
\newtheorem{thm}[prop]{Theorem}
\newtheorem{rem}[prop]{Remark}
\newtheorem{lem}[prop]{Lemma}
\newtheorem{exmp}[prop]{Example}
\newenvironment{proof}{
\trivlist \item[\hskip \labelsep\mbox{\it Proof:
}]}{\hfill\mbox{$\square$}
\endtrivlist}
\title{Affine solution sets of sparse polynomial systems
\footnote{Partially supported by the Argentinian research grants
CONICET PIP 099/11 and UBACYT 20020090100069 (2010-2013).}}
\author{Mar\'\i a Isabel Herrero$^{\sharp}$, Gabriela Jeronimo$^{\sharp,\diamond}$, Juan
Sabia$^{\dag,\diamond}$\\[5mm]
{\small $\sharp$ Departamento de Matem\'atica, Facultad de Ciencias
Exactas y
Naturales,} \\[-1mm] {\small Universidad de Buenos Aires, Ciudad
Universitaria, (1428) Buenos Aires, Argentina}\\[2mm]
{\small $\dag$ Departamento de Ciencias Exactas, Ciclo B\'asico
Com\'un,}\\[-1mm]
{\small Universidad de Buenos Aires, Ciudad Universitaria, (1428)
Buenos Aires, Argentina}\\[2mm]
{\small $\diamond$ IMAS, CONICET, Argentina }}
\date{}
\begin{document}

\maketitle

\begin{abstract}
This paper focuses on the equidimensional decomposition of affine
varieties defined by sparse polynomial systems. For generic systems
with fixed supports, we give combinatorial conditions for the
existence of positive dimensional components which characterize the
equidimensional decomposition of the associated affine variety. This
result is applied to design an equidimensional decomposition
algorithm for generic sparse systems. For arbitrary sparse systems
of $n$ polynomials in $n$ variables with fixed supports, we obtain
an upper bound for the degree of the affine variety defined and we
present an algorithm which computes finite sets of points
representing its equidimensional components.
\end{abstract}

\bigskip

\noindent \textbf{Keywords:} Sparse polynomial systems, equidimensional decomposition of
algebraic varieties, degree of affine varieties, algorithms and
complexity

\section{Introduction}

The aim of this paper is to describe the affine solution set of a
polynomial system taking into account the sets of exponents of the
monomials with nonzero coefficients in the polynomials involved,
that is, their support sets.

Bernstein (\cite{Ber75}),  Kushnirenko (\cite{Kus76}) and Khovanskii (\cite{Kho78}) proved that the
number of isolated solutions in $(\C^* )^n$ of a polynomial system
with $n$ equations in $n$ variables is bounded by a combinatorial
invariant (the mixed volume) associated with their supports. This
result, which may be considered  the basis for the current study of
sparse polynomial systems, hints at the fact that the algorithms
solving these systems should have shorter computing time than the
general ones.

There are several algorithms to compute either numerically or
symbolically the isolated roots of sparse polynomial systems in
$(\C^* )^n$ (see, for example,\cite{VVC94, HS95, Roj03,
JMSW09}). The efficiency of some of these algorithms relies on the
use of polyhedral deformations  preserving the monomial structure of
the polynomial system under consideration.

The first step towards the study of the solutions of sparse systems
in the \emph{affine} case was to obtain upper bounds for the number
of isolated solutions in $\C^n$ in terms of the structure of their
supports and to design numerical algorithms to compute them
(see \cite{Roj94, LW96, RW96, HS97, EV99, GLW99}). Symbolic
algorithms performing this task were given in \cite{JMSW09} and
\cite{HJS10}.

The next natural step is to characterize the components of higher
dimension of the affine variety defined by a system of sparse
polynomial equations taking into account their supports. In this
context, in \cite{Ver09}, certificates for the existence of curves
are given in the numerical framework.

There are different symbolic algorithms describing the
equidimensional decomposition of a variety  which only take into
consideration the degrees of the polynomials defining it and not
their particular monomial structure. The earliest
deterministic ones can be found in \cite{ChG83} and \cite{GH91}
(see also \cite{GTZ88}, where the more general problem of the primary
decomposition of ideals is considered).
Probabilistic algorithms with shorter running time are given in
\cite{EM99} and \cite{JS02}. The complexities of these
probabilistic algorithms are polynomial in the B\'ezout number of
the system, which, in the generic case, coincides with the degree of
the variety the system defines. Other probabilistic algorithms are
presented in \cite{Lec03} and \cite{JKSS04} with complexities
depending on a new invariant related to the system (the
\emph{geometric degree}) which refines the B\'ezout bound.
Some of these algorithms can be derandomized easily via the Schwartz-Zippel
lemma (\cite{Schwartz80, Zippel93}) provided upper bounds for the degrees of
the polynomials characterizing exceptional instances are known.

Algorithms dealing with the problem from the numerical point of view can be traced back to
\cite{SW96}. A series of papers by Sommese, Verschelde and Wampler present successive improvements
to this procedure, leading to the irreducible decomposition algorithm
based on homotopy continuation described in \cite{SW05} (see references therein).

In this paper we analyze, both from the theoretic and algorithmic
points of view, the equidimensional decomposition of the affine
variety defined by a sparse polynomial system.

First, we consider the case of generic sparse systems. In this
context, there exists a major difference with the  case of dense
polynomials. The set of solutions of a generic system of $n$
polynomials in $n$ variables with fixed degrees consists only of
isolated points. However, fixing the set of supports of the $n$
polynomials in $n$ variables involved in a sparse system, for
generic choices of its coefficients, there may appear affine
components of positive dimension (see, for instance, Examples
\ref{ejemplo1} and \ref{ex:genericdecomp}  below). We show that the
existence of these generic components of positive dimension depends
only on the combinatorial structure of the supports: in Proposition
\ref{prop:components} below, we give conditions that yield these
components. Such conditions provide not only a theoretic description
of the equidimensional decomposition of the affine variety
$V(\mathbf{f})$ defined by a \emph{generic} sparse system
$\mathbf{f}$ in terms of the solution sets in the torus of smaller
systems $\mathbf{f}_I$ associated to subsets $I\subset \{1,\dots,
n\}$ but also a formula for the degree of $V(\mathbf{f})$ in this
generic case (see Theorem \ref{teoremacomb} below). Previous results
on this subject can be found in \cite{CD07}. There,  using also  a
combinatorial approach, the authors analyze thoroughly the problem
of deciding whether a system of $n$ binomials in $n$ variables has a
finite number of affine solutions and, in this case, the
computational complexity of the corresponding counting problem.

Our result is used to design a probabilistic algorithm which, for a
generic sparse system $\mathbf{f}$, computes the equidimensional
decomposition of $V(\mathbf{f})$ with a complexity depending on its
degree  and combinatorial invariants associated with the system (see
Theorem \ref{thm:gendecomp}). The idea of the algorithm is to
compute first a family of subsets $I \subset \{1,\dots, n\}$ which
may lead to components of $V(\mathbf{f})$ and solve the
corresponding polynomial systems $\mathbf{f}_I$ by applying symbolic
polyhedral deformations (\cite{JMSW09}) and a Newton-Hensel based
procedure (\cite{GLS01}). The output of the algorithm is, for each
$k=0,\dots, n-1$, a list of \emph{geometric resolutions}
representing the equidimensional component of dimension $k$ of
$V(\mathbf{f})$. A geometric resolution of an equidimensional
variety is a parametric type description of the variety which is
widely used in symbolic computations (see, for
instance, \cite{GHM+98, Rou99, GLS01, Sch03}); in Section
\ref{sec:definitions} below we give the precise definition we use.

The next step is to consider the equidimensional decomposition of
the affine variety defined by an \emph{arbitrary} system of sparse
polynomials. A question to answer beforehand is which parameter
should be involved in the algebraic complexity of an algorithm
solving this task. From previous experience, a natural invariant
expected to appear in the complexity bounds is the degree of the
variety, which is, in particular,  an upper bound for the number of
its irreducible components.

Unlike the B\'ezout bound for dense polynomials, in the sparse
setting, the degree of the affine variety defined by a generic
square system is \emph{not} an upper bound for the degree of the
variety defined by any system with the same supports (see Example
\ref{ejemplo2}). In \cite{KPS01}, a bound for the degree of the
affine variety defined by an arbitrary sparse polynomial system
depending on a mixed volume related to the \emph{union} of the
supports of the polynomials is presented (see also
\cite[Theorem 1]{Roj03} for a related result). Here, we obtain a sharper bound for
this degree also given by  a mixed volume associated to the supports
but not involving their union:

\bigskip
\noindent \textbf{Theorem.} \emph{Let $\mathbf{f} = (f_1, \dots,
f_n)$ be $n$ polynomials in $\C[X_1,\dots, X_n]$ supported on $\cA=
(\cA_1,\dots, \cA_n)$ and let $V(\mathbf{f}) = \{ x \in \C^n \mid
f_i( x) = 0 \hbox{ \ for every  } 1 \le i \le n \}.$  Then
$$ \deg (V(\mathbf{f})) \le MV_n(\cA_1 \cup \Delta,\dots, \cA_n\cup \Delta),$$
where $\Delta = \{ 0, e_1, \dots, e_n\}$ with $e_i$ the $i$th vector
of the canonical basis of $\R^n$ and $MV_n$ stands for the
$n$-dimensional mixed volume.}

\bigskip

Finally, we obtain an algorithm which, using a polyhedral
deformation, describes points in every irreducible component of the
affine variety defined by an arbitrary square sparse system with
complexity depending on the degree bound previously stated.

The idea of the algorithm relies on the fact that cutting the
variety with a generic affine linear variety of codimension $k$,
sufficiently many points in each irreducible component of dimension
$k$ can be obtained. To keep the complexity  within the desired
bounds, instead of computing this intersection, we proceed in a
particular way which enables us to compute a finite superset of the
intersection included in the variety. Representing a positive
dimensional variety by means of a finite set of points is a
well-known approach in numerical algebraic geometry (see the
notion of \emph{witness point supersets} in \cite{SW05}).

\bigskip

\noindent \textbf{Theorem.} \emph{Let $\mathbf{f} = (f_1, \dots,
f_n)$ be $n$ polynomials in $\Q[X_1,\dots, X_n]$ supported on $\cA=
(\cA_1,\dots, \cA_n)$. There is a probabilistic algorithm which,
taking as input the sparse representation of $\,\mathbf{f}$ computes
a family of $n$ geometric resolutions $(R^{(0)}, R^{(1)}, \dots,
R^{(n-1)})$ such that, for every $0\le k \le n-1$,  $R^{(k)}$
represents a finite set  containing $\deg V_k(\mathbf{f})$ points in
the equidimensional component $V_k(\mathbf{f})$ of dimension $k$ of
$V(\mathbf{f})$. The number of arithmetic operations over $\Q$
performed by the algorithm is of order $O\,\widetilde{\ }( n^4 d N
D^2)$, where $d= \max_{1\le j \le n}\{ \deg(f_j)\}$, $N=
\sum_{j=1}^n \#(\cA_j \cup \Delta)$ and $D=MV_n(\cA_1 \cup
\Delta,\dots, \cA_n\cup \Delta)$.}

\bigskip

Here $O\,\widetilde{\ }$ refers to the standard soft-oh notation
which does not take into account logarithmic factors. Furthermore,
we have ignored factors depending polynomially on the size of
certain combinatorial objects associated to the polyhedral
deformation. For a precise complexity statement, see Theorem
\ref{thm:nongeneric}, and for error probability considerations, see
Remark \ref{rem:prob}.

\bigskip

The paper is organized as follows. In Section \ref{sec:prelim}, the
basic definitions and notations used throughout the paper are
introduced. Section \ref{sec:generic} is devoted to the
equidimensional decomposition of affine varieties defined by generic
sparse systems: first, we consider the solution  sets in $(\C^*)^n$
of underdetermined systems (see Subsection \ref{sec:toriccomps});
then, we prove our main theoretic result on equidimensional
decomposition and present our algorithm to compute it (see
Subsection \ref{sec:affinecomps}). Finally, in Section
\ref{sec:arbitrary}, we consider the case of arbitrary sparse
systems: we prove the upper bound for the degree of affine varieties
defined by these systems in Subsection \ref{sec:degreebound} and, in
Subsection \ref{sec:algnongeneric}, we describe our algorithm to
compute representative points of the equidimensional components.

\section{Preliminaries}\label{sec:prelim}

\subsection {Basic definitions and notation}\label{sec:definitions}

Throughout this paper, unless otherwise explicitly stated, we deal with polynomials in $\Q[X_1, \dots,
X_n]$, that is to say polynomials with rational coefficients in $n$
variables $X=(X_1,\dots, X_n)$. If $\mathbf{f}=(f_1, \dots, f_s)$ is
a family of such polynomials, $V(\mathbf{f})$ will denote the
algebraic variety of their common zeroes in $\C^n$, the
$n$-dimensional affine space over the complex numbers.

The algebraic variety $V(\mathbf{f})\subset \C^n$ can be decomposed
uniquely as a finite union of irreducible varieties in a
non-redundant way. This leads to the \emph{equidimensional
decomposition} of the variety:
$$V(\mathbf{f}) = \bigcup_{k=0}^n V_k(\mathbf{f}),$$ where,
for every $0\le k \le n$, $V_k(\mathbf{f})$ is the (possibly empty)
union of all the irreducible components of dimension $k$ of
$V(\mathbf{f})$.

 The degree of each equidimensional component $V_k(\mathbf{f})$ is the number of points in its intersection with
 a generic affine linear variety of codimension $k$, and the degree of $V(\mathbf{f})$,
 which we denote by $\deg(V(\mathbf{f}))$, is the
 sum of the degrees of its equidimensional components (see \cite{Hei83}).

A common way to describe zero-dimensional affine varieties defined
by polynomials over $\Q$ is a \emph{geometric resolution}
(see, for instance, \cite{GLS01} and the references therein). The
precise definition we are going to use is the following:

Let $V= \{\xi^{(1)}, \dots, \xi^{(D)}\}\subset \C^n$ be a
zero-dimensional variety defined by rational polynomials. Given a
linear form $\ell = \ell_1X_1 + \dots + \ell_n X_n$ in $\Q[X_1,
\dots, X_n]$ such that $\ell(\xi^{(i)})\ne \ell(\xi^{(j)})$ if $i
\ne j$, the following polynomials completely characterize $V$:
\begin{itemize}
\item the minimal polynomial $q = \prod_{1 \le i \le D}(u - \ell(\xi^{(i)})) \in \Q[u]$ of $\ell$ over
the variety $V$ (where $u$ is a new variable),
\item polynomials $v_1, \dots, v_n \in \Q[u]$ with $\deg(v_j) < D$ for every $1 \le j \le n$
satisfying $V = \{(v_1 (\eta), \dots, v_n (\eta)) \in \C^n \mid \eta
\in \C, q(\eta)=0\}$.
\end{itemize}
The family of univariate polynomials $(q, v_1,\dots, v_n) \in
\Q[u]^{n+1}$ is called a geometric resolution of $V$ (associated
with the linear form $\ell$).

An equivalent description of $V$ can be given through the so-called
\emph{Kronecker representation} (see \cite{GLS01}), which
consists of the minimal polynomial $q$ and polynomials $w_1, \dots,
w_n \in \Q[u]$ such that $V = \{(\frac{w_1}{q'} (\eta), \dots,
\frac{w_n}{q'} (\eta)) \in \C^n \mid \eta \in \C, q(\eta)=0\}$,
where $q'$ is the derivative of $q$. Either representation can be
obtained from the other one in polynomial time.

The notion of geometric resolution can be extended to any
equidimensional variety:

Let $V\subset \C^{n}$ be an equidimensional variety of dimension $r$
defined by polynomials $f_1,\dots, f_{n-r}\in\Q[X_1,\dots, X_n]$. Assume that for
each irreducible component $C$ of $V$, the identity $I(C) \cap
\Q[X_1,\dots, X_r] = \{ 0 \}$ holds, where $I(C)$ is the ideal of all polynomials in $\Q[X_1,\dots, X_n]$ vanishing identically over C. Let $\ell$ be a nonzero linear
form in $\Q[X_{r+1},\dots, X_n]$ and $\pi_\ell:V\to\C^{r+1}$ the
morphism defined by $\pi_\ell(x)=(x_1,\dots, x_r,\ell(x))$. Then,
there exists a unique (up to scaling by nonzero elements of $\Q$)
polynomial $Q_\ell\in\Q[X_1,\dots, X_r,u]$ of minimal degree
defining $\overline{\pi_\ell(V)}$. Let $q_\ell\in\Q(X_1,\dots,
X_r)[u]$ denote the (unique) monic multiple of $Q_\ell$ with $\deg_u
(q_\ell)=\deg_u (Q_\ell)$. In these terms, if $\ell$ is a
\emph{generic} linear form, a \emph{geometric resolution} of $V$ is
$(q_\ell, v_{r+1}, \dots, v_n) \in (\Q(X_1,\dots, X_r)[u])^{n-r+1}
$, where, for $r+1\le i\le n$, $v_{i} $ satisfies
$$\frac{\partial q_\ell}{\partial u}( \ell) \, X_{i}= v_{i} ( \ell) \ \mbox{ in  }
\Q(X_1,\dots, X_r)\otimes\Q[V]$$ and $\deg_u (v_{i})< \deg_u (q_\ell
)$.

\subsection{Algorithms and codification}

Although we work with polynomials, our algorithms only deal with
elements in $\Q$. The notion of \emph{complexity} of an algorithm we
consider is the number of operations and comparisons in $\Q$ it has
to perform. We will encode multivariate polynomials in different
ways:
\begin{itemize}
\item in sparse form, that is, by means of the list of pairs $(a, c_a)$ where $a$ runs
over the set of exponents of the monomials appearing in the
polynomial with nonzero coefficients and $c_a$ is the corresponding
coefficient,
\item in the standard dense form, which encodes a polynomial
as the vector of its coefficients including zeroes (we use this
encoding only for univariate polynomials),
\item  in the \emph{straight-line program} (slp for short) encoding. A straight-line
program is an algorithm without branchings which allows the
evaluation of the polynomial at a generic value (for a precise
definition and properties of slp's, see \cite{BCS97}).
\end{itemize}

In our complexity estimates, we will use the usual $O$ notation: for
$f,g\colon \Z_{\ge 0} \to \R$,  $f(d) = O(g(d))$ if $|f (d)| \le c|g(d)|$
for a positive constant $c$. We will also use the notation $M(d) = d
\log^2(d) \log(\log(d))$, where $\log$ denotes logarithm to base
$2$. We recall that multipoint evaluation and interpolation of
univariate polynomials of degree $d$  with coefficients in a
characteristic-0 commutative ring $R$  can be performed with
$O(M(d))$ operations and that multiplication and division with
remainder of such polynomials can be done with $O(M(d)/ \log(d))$
arithmetic operations in $R$.

We denote by $\Omega$ the exponent in the complexity estimate
$O(d^\Omega)$ for the multiplication of two $d\times d$ matrices
with rational coefficients. It is known that $\Omega < 2.376$
(see \cite[Chapter 12]{Gat99}). Finally, we write $\overline \Omega$
for the exponent ($\overline \Omega <4$) in the complexity
$O(d^{\overline \Omega})$ of  operations on $d\times d$ matrices
with entries in a commutative ring $R$.

Our algorithms are probabilistic in the sense that they make random
choices of points  which lead to a correct computation provided the
points lie outside  certain proper Zariski closed sets of  suitable
affine spaces. Then, using the Scwhartz-Zippel lemma (\cite{Schwartz80, Zippel93}),
the error probability of our algorithms
can be controlled by making these random choices within sufficiently
large sets of integer numbers whose size depend on the degrees of the polynomials
defining the previously mentioned Zariski closed sets.

\subsection {Sparse systems}

Given a family  $\cA = (\cA_1,\ldots,\cA_s)$ of  finite subsets of
$(\Z_{\ge 0})^n$, a \emph{sparse polynomial system  supported on}
$\cA$  is given by polynomials $ f_j =\sum_{a\in\cA_j}c_{j,a}\,
X^a$ in the variables $X=(X_1,\ldots,X_n)$, with $c_{j,a}\in
\C\setminus\{0\}$ for each $a\in\cA_j$ and $1\le j\le s$. We write
$\mathbf{f}=(f_1,\dots, f_s)$ for this system.

Assume $s=n$. We denote by $MV_n(\cA)$ the \emph{mixed volume} of
the convex hulls of $\cA_1,\ldots,\cA_n$ in $\R^{n}$ (see, for
example, \cite[Chapter 7]{CLO} for the definition), which is an upper
bound for the number of isolated roots in $(\C^*)^n$ of a sparse
system supported on $\cA$  (see \cite{Ber75}).

The mixed volume $MV_n(\mathcal{A})$ can be computed as the sum of
the $n$-dimensional volumes of the convex hulls of all the
\emph{mixed cells} in a fine mixed subdivision of $\cA$. Such a
subdivision can be obtained by means of a standard lifting process
(see \cite[Section 2]{HS95}): let $\omega= (\omega_1,\dots,
\omega_n)$ be a $n$-tuple of generic functions $\omega_j: \cA_j\to
\R$ and consider the polytope $P$ in $\R^{n+1}$ obtained by taking
the pointwise sum of the convex hulls of the graphs of $\omega_j$
for $1\le j\le n$. Then, the projection of the lower facets of $P$
(that is, the $n$-dimensional faces with inner normal vector with a
positive last coordinate) induces a fine mixed subdivision of $\cA$.
The dynamic enumeration procedure described in \cite{MTK07} appears
to be the fastest algorithm known up until now to achieve this
computation of mixed cells.

The \emph{stable mixed volume} of $\cA$, which is denoted by
$SM_n(\cA)$, is introduced in \cite{HS97} as an upper bound for the
number of isolated roots in $\C^n$ of a sparse system supported on
$\cA$. Consider $\cA^{0} = (\cA_1^{0}, \dots, \cA_n^{0})$ the family with
$\cA_j^{0} := \cA_j \cup \{0\}$ for every $1\le j \le n$, and let  $\omega^{0} =
(\omega^{0}_1, \dots, \omega^{0}_n)$ be a lifting for $\cA^{0}$
defined by $\omega^{0}_j(q) = 0$ if $q \in \cA_j$ and
$\omega^{0}_j(0)= 1$ if $0 \notin \cA_j$. The stable mixed volume of
$\cA$ is defined as the sum of the mixed volumes of all the cells in
the subdivision of $\cA^{0}$ induced by $\omega^{0}$ corresponding
to facets having inner normal vectors with non-negative entries.

\section{Generic sparse systems}\label{sec:generic}

\subsection{Toric components}\label{sec:toriccomps}

Let $n$ and $m$ be positive integers and let $\mathcal{A} = (\cA_1,
\dots, \cA_m)$ be a family of finite subsets of $(\Z_{\ge 0})^n$.
For $1 \le j \le m$, let
$$ F_j (C_j, X) = \sum_{a \in \cA_j} C_{j,a}X^a $$
where  $X =(X_1, \dots, X_n)$; for $a = (a_1, \dots, a_n)$, $X^a=
\prod_{1 \le j \le n}X_j^{a_j}$, and $C_j = (C_{j,a})_{a \in \cA_j}$
are $N_j = \# \cA_j$ indeterminate coefficients.

Following \cite{Stu94}, consider the incidence variety
$$ \{(x,c) \in (\C^*)^n \times (\P^{N_1-1}\times \dots \times \P^{N_m-1}) \mid F_j(c_j, x) = 0  \hbox{ \ for every \ } 1 \le j \le m  \}$$
and its projection to the second factor
$$Z = \{c \in  \P^{N_1-1}\times \dots \times \P^{N_m-1} \mid \exists \ x \in (\C^*)^n  \hbox{\ with \ }  F_j(c_j, x) = 0  \hbox{  for every  } 1 \le j \le m  \}.$$

Note that the elements in $Z$ correspond essentially to coefficients of
systems supported on $\cA$ which have a solution in $(\C^*)^n$.

\begin{lem}\label{genericdimension}
The Zariski closure of $Z$ equals $\P^{N_1-1}\times \dots \times
\P^{N_m-1}$ if and only if, for every  $J \subseteq \{ 1,\dots,
m\}$, $\dim \left(\sum_{j \in J} \cA_j\right) \ge \# J$. In
particular, if $m > n$,  a generic system supported on $\cA$ has no
solutions in $(\C^*)^n$. Moreover, if $m \le n$ and  $\dim
\left(\sum_{j \in J} \cA_j\right) \ge \# J$ for every $J \subseteq
\{ 1,\dots, m\}$, the solution set in $(\C^*)^n$ of a generic system
supported on $\cA$ is an equidimensional variety of dimension $n-m$
and degree $MV_n(\cA_1,\dots, \cA_m, \Delta^{(n-m)})$, where $\Delta
= \{ 0,e_1,\dots, e_n\}$  with $e_i$ the $i$th vector of the
canonical basis of $\R^n$ and the superscript $(n-m)$ indicates that
it is repeated $n-m$ times.
\end{lem}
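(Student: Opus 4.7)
My plan is to reduce everything to Bernstein's theorem by augmenting $\mathbf{f}$ with $n-m$ generic affine linear forms supported on $\Delta$, and to invoke the classical Minkowski-type criterion stating that $MV_n(P_1,\dots,P_n)>0$ if and only if $\dim\bigl(\sum_{j\in J} P_j\bigr) \ge \#J$ for every $J \subseteq \{1,\dots,n\}$. Applied to the $n$-tuple $(\cA_1,\dots,\cA_m,\Delta,\dots,\Delta)$ of polytopes, this criterion collapses precisely to the combinatorial condition in the statement: since a single copy of $\Delta$ already spans $\R^n$, every potential obstruction involving an index in $\{m+1,\dots,n\}$ is automatically satisfied, so only subsets $J \subseteq \{1,\dots,m\}$ can produce a genuine obstruction.

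I would first handle the necessity direction of~(1) together with part~(2). Suppose some $J \subseteq \{1,\dots,m\}$ satisfies $\dim\bigl(\sum_{j\in J}\cA_j\bigr) < \#J$. The criterion above, applied to $(\cA_j)_{j\in J}$ padded with $n-\#J$ copies of $\Delta$, yields $MV_n(\cA_J,\Delta^{(n-\#J)})=0$. Augmenting the subsystem $\mathbf{f}_J$ with $n-\#J$ generic linear forms produces a square system whose torus roots are bounded by this vanishing mixed volume, so $\mathbf{f}_J$ (hence $\mathbf{f}$) generically has no roots in $(\C^*)^n$ and $Z$ cannot be dense. The case $m>n$ of part~(2) is immediate by taking $J=\{1,\dots,m\}$ and noting $\dim\bigl(\sum_{j=1}^m\cA_j\bigr)\le n<m$.

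For the sufficiency direction and part~(3), assume $m \le n$ and the combinatorial condition; the criterion then gives $D := MV_n(\cA_1,\dots,\cA_m,\Delta^{(n-m)}) > 0$. Pick $n-m$ generic affine linear forms $\ell_{m+1},\dots,\ell_n$ supported on $\Delta$; by Bernstein the augmented square system has exactly $D$ isolated torus roots for generic coefficients. I would then look at the incidence variety $W \subseteq (\C^*)^n \times \prod_j \P^{N_j-1}$: it is irreducible (its first projection is surjective with linear fibers), and at any triple $(\mathbf{f}_0,\ell_0,x_0)$ where $x_0$ is an isolated torus root of a generic augmented system, the Jacobian $d\mathbf{f}(x_0)$ has rank $m$, so the second projection $W \to \prod_j\P^{N_j-1}$ is smooth of relative dimension $n-m$ at such points. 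This simultaneously shows $Z$ is dense and that for generic $\mathbf{f}$ the torus part $V = V(\mathbf{f}) \cap (\C^*)^n$ is non-empty with every component of dimension $n-m$: the lower bound comes from the principal ideal theorem in the smooth affine variety $(\C^*)^n$, while the upper bound follows from the finiteness of $V \cap \{\ell_{m+1}=\dots=\ell_n=0\}$. The degree of $V$, being by definition the number of points in its intersection with a generic affine linear variety of codimension $n-m$, therefore equals $D$ by Bernstein.

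The main technical obstacle I anticipate is the Minkowski-type positivity criterion for mixed volumes, which I would cite from standard polytope theory references (or prove separately by induction on $n$ using the multilinearity of $MV_n$). Beyond that, the proof is careful bookkeeping: two applications of Bernstein's theorem, a genericity transfer from the augmented parameter space back to $\mathbf{f}$ alone via the smoothness of the projection from $W$, and a verification that the degree of the torus part $V$ coincides with the degree of its affine closure---valid generically because no component of the closure is forced into the coordinate hyperplanes.
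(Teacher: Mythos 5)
Your proof is correct but takes a genuinely more self-contained route than the paper's. The paper disposes of the first equivalence (density of $Z$ iff the combinatorial condition) by citing Sturmfels' Theorem~1.1 and then argues the equidimensionality and degree formula directly, taking non-emptiness of $V^*(\mathbf{f})$ for granted from that citation; you reprove the density characterization from scratch, using the Minkowski positivity criterion plus Bernstein applied to the augmented subsystem $\mathbf{f}_J$ for necessity, and an incidence-variety smoothness argument (full rank of $d\mathbf{f}$ at a simple root of a generic augmented system) for sufficiency. That smoothness step is essentially the substance of Sturmfels' own proof, so what you buy is self-containedness at the cost of length; both arguments silently rest on the same Minkowski positivity criterion (the paper uses it implicitly in the line ``Therefore, $MV_n(\widetilde\cA)>0$''), so that ingredient is shared rather than a difference. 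One step to tighten in the necessity direction: the vanishing mixed volume plus Bernstein in its weak form (``at most $MV$ \emph{isolated} torus roots'') does not by itself preclude a positive-dimensional $V(\mathbf{f}_J)\cap(\C^*)^n$, whose slice by $n-\#J$ generic affine hyperplanes could still be positive-dimensional and hence carry no isolated points, giving no contradiction. You need the strong form that a generic square Laurent system has a finite, reduced torus solution set of cardinality exactly $MV_n$; you invoke this strong form later for the Jacobian rank argument, so it is available --- just make the dependence explicit at this earlier step so the ``hence $\mathbf{f}_J$ generically has no roots'' conclusion actually closes.
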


\begin{proof} The first statement of the Lemma follows as in
\cite[Theorem 1.1]{Stu94}.

Assume that $m\le n$ and for every  $J \subseteq \{ 1,\dots, m\}$,
$\dim \left(\sum_{j \in J} \cA_j\right) \ge \# J$. Then, if
$\widetilde \cA = (\cA_1,\dots, \cA_m, \Delta^{(n-m)})$, we have
that for every $\widetilde{J} \subseteq \{ 1,\dots, n\}$, the
inequality $\dim \left(\sum_{j \in \widetilde{J}} \widetilde
\cA_j\right) \ge \# \widetilde{J}$ holds, since $\dim (\Delta) = n$.
Therefore, $MV_n (\widetilde \cA) >0$, which implies that a generic
system supported on $\widetilde \cA$ has finitely many solutions in
$(\C^*)^n$ (as many as $MV_n (\widetilde \cA)$).

Now, the solution set of a generic system supported on $\widetilde
\cA$ is the intersection of the solution set of a generic system of
$m$ equations supported on $\cA$, which is a variety of dimension at
least $n-m$ in $(\C^*)^n$, and $n-m$ generic hyperplanes. We
conclude that the solution set in $(\C^*)^n$ of a generic system
supported on $\cA$ is an equidimensional variety of dimension $n-m$
and degree $MV_n(\widetilde \cA)$.
\end{proof}

Assume now that $\mathbf{f} = (f_1, \dots, f_m)$ are generic
polynomials in the variables $X=(X_1,\dots, X_n)$ supported on $\cA=
(\cA_1,\dots, \cA_m)\subset (\Z^n_{\ge 0})^m$, with $m\le n$.  The
previous lemma states that the affine variety
$V^*(\mathbf{f})\subset \C^n$ consisting of the union of all the
irreducible components of $V(\mathbf{f}) =\{ x \in \C^n \mid f_j( x)
= 0 \hbox{ for every  } 1 \le j \le s \} $ that have a non-empty
intersection with $(\C^*)^n$ is either the empty set or an
equidimensional variety of dimension $n-m$.

In what follows, we extend the symbolic algorithm from
\cite[Section 5]{JMSW09}, which deals with the case $m=n$, to a
procedure for the computation of a geometric resolution of
$V^*(\mathbf{f})$ for arbitrary $m\le n$. As in \cite{JMSW09}, our
algorithm assumes that a fine mixed subdivision of $(\cA,
\Delta^{(n-m)})$ induced by a generic lifting function $\omega=
(\omega_1,\dots,\omega_n)$ is given by a pre-processing.

\bigskip
\noindent\hrulefill

\noindent \textbf{Algorithm} \texttt{GenericToricSolve}

\medskip
\noindent  INPUT: A sparse representation of a generic system
$\mathbf{f} =(f_1,\dots, f_m)$ in the variables $X=(X_1,\dots, X_n)$
supported on $\cA= (\cA_1,\dots, \cA_m)$,  a lifting function
$\omega =(\omega_1, \dots, \omega_n)$ and the mixed cells in the
subdivision of $(\cA, \Delta^{(n-m)})$ induced by $\omega$.

\smallskip
\begin{enumerate}
\item\label{alg:empty} If the fine mixed subdivision of $(\cA, \Delta^{(n-m)})$ does not contain any mixed cell, return $R=\emptyset$. Otherwise, continue to Step \ref{alg:matrixA}.
\item\label{alg:matrixA} Choose randomly the entries of a matrix $A = (a_{hl}) \in \Q^{n \times n}$ and  a vector $b = (b_1,\dots, b_{n}) \in \Q^{n}$.
\item\label{alg:LinearForms} For $1 \le h \le n-m$, consider the affine linear
forms $L_h = \sum_{l=1}^n a_{hl} X_l - b_h$.
\item\label{alg:ToricSolve0} Apply  \cite[Algorithm 5.1]{JMSW09} to obtain a geometric resolution
$(q(u),v_1(u), \dots, v_{n}(u))$ of the isolated common zeroes of
$\mathbf{f}, L_1, \dots, L_{n-m}$ in $(\C^*)^{n}$.
\item \label{alg:slp_g} Obtain an slp for the polynomials $\mathbf{g}:=\mathbf{f}(A^{-1}Y)$ in the new variables $Y = (Y_1, \dots, Y_{n})$.
\item \label{alg:ChangeGR} Compute $(w_1(u), \dots, w_{n}(u))^t:=A(v_1(u), \dots,
v_{n}(u))^t$.
\item \label{alg:NewtonGLS}  Apply the Global Newton Iterator from
\cite[Algorithm 1]{GLS01} to the polynomials $\mathbf{g}(Y)$, the
geometric resolution $(q(u), w_{n-m+1}(u), \dots, w_{n}(u))$ of
$V(\mathbf{g}(b, Y_{n-m+1},\dots, Y_n))$,
 and precision $ \kappa=
MV_{n}(\cA, \Delta^{(n-m)})$ to obtain a geometric resolution $R_Y$
of an equidimensional variety of dimension $n-m$.
\item \label{alg:ChangeGR2} Obtain the geometric resolution
$R:=A^{-1}{R_Y}$ of $V^*(\mathbf{f})$.
\end{enumerate}

\smallskip
\noindent OUTPUT: The geometric resolution $R$ of $V^*(\mathbf{f})$.

\noindent\hrulefill

\bigskip

In the sequel we will justify the correctness of the above procedure
and estimate its complexity.

Since $\mathbf{f}$ is a  generic sparse system of $m$ equations in
$n$ variables, as a consequence of Lemma \ref{genericdimension}, if
$L_1,\dots, L_{n-m}$ are generic linear forms,  $V^*(\mathbf{f})\cap
V(L_1, \dots, L_{n-m})$ is either the empty set (when
$V^*(\mathbf{f})$ is the empty set) or a finite set consisting of
$\deg (V^*(\mathbf{f}))$ points (when $V^*(\mathbf{f})$ is not the
empty set). Step \ref{alg:empty} decides whether
$V^*(\mathbf{f})\cap V(L_1, \dots, L_{n-m})$ is empty or not, since
the mixed volume of the supports of these polynomials is the sum of
the volumes of the mixed cells in a fine mixed subdivision
(\cite{HS95}). If it is not empty, this finite set of points can be
regarded as  a generic fiber of a generic linear surjective
projection and therefore, it enables us to recover the variety
$V^*(\mathbf{f})$ by deformation techniques.

Thus, the idea of the algorithm is to choose $n-m$ linear forms at
random, then compute a geometric resolution of the set
$V^*(\mathbf{f})\cap V(L_1, \dots, L_{n-m})$ and finally, apply a
Newton-Hensel lifting to the finite set obtained in order to get a
geometric resolution of $V^*(\mathbf{f})$.

Step \ref{alg:matrixA} deals with the random choice of the entries
of a matrix and a vector. This
random choice does not affect the overall complexity of the procedure (see
Remark \ref{rem:prob} below). In Step
\ref{alg:LinearForms}, the sparse encoding of $n-m$ linear forms
constructed from the previous data is obtained.

The idea of Step \ref{alg:ToricSolve0} is to obtain a geometric
resolution of $V^*(\mathbf{f})\cap V(L_1,\dots, L_{n-m})$. In order
to do this, the algorithm computes the isolated common zeroes in
$(\C^*)^n$ of the \emph{generic} system $\mathbf{f}, L_1,\dots,
L_{n-m}$ supported on $(\cA, \Delta^{(n-m)})$. Note that, if
$L_1,\dots, L_{n-m}$ are generic, this set of points meets only the
irreducible components of $V^*(\mathbf{f})$, that is, it contains no
point in the irreducible components of $V(\mathbf{f})$ with
vanishing coordinates. By applying the result in \cite[Proposition
5.13]{JMSW09}, it follows  that the complexity of this step is
$O\left((n^3 (N + (n-m)n) \log d +
n^{1+\Omega})M(D)M(\mathfrak{M})(M(D)+M(E))\right)$, where
\begin{itemize}\itemsep=0pt
\item $N:=\sum\limits_{1\le j \le m}\#\cA_j;$
\item $d:=\max_{1\le j \le m}\{ \deg(f_j)\};$
\item $D:=MV_{n}(\cA, \Delta^{(n-m)})$;
\item $\mathfrak{M}:=\max\{\|\mu\| \}$, where the maximum ranges over all primitive normal vectors to the mixed cells in the fine mixed subdivision of $(\cA, \Delta^{(n-m)})$ given by $\omega$;
\item $E:=MV_{n+1}(\Delta \times \{0\},\cA_1(\omega_1), \dots, \cA_m(\omega_m), \Delta(\omega_{m+1}), \dots,
\Delta(\omega_{n}))$, where $\cA_j(\omega_j)$ $(1\le j \le m)$ and
$\Delta (\omega_l)$ $(m+1\le l\le n)$ are, respectively, the
supports of $\mathbf{f}, L_1, \dots, L_{n-m}$ lifted by $\omega$.
\end{itemize}

Now the algorithm lifts the geometric resolution of the
zero-dimensional subset of $V(\mathbf{f})$ obtained so far to a
geometric resolution of the union of the irreducible components of
this variety having a non-empty intersection with $(\C^*)^{n}$. In
order to do this, we consider the change of variables given by $Y =
A. X$ and make this change of variables in the polynomials
$\mathbf{f}$ and the geometric resolution already obtained (Steps
\ref{alg:slp_g} and \ref{alg:ChangeGR}). A possible way of making this change of variables
is by first computing
$A^{-1}$ with $O(n^\Omega)$ operations and using it to obtain an slp of
length $L = O( n^2 + n\log(d) N) $ for the polynomials in
$\mathbf{g}$ (note that the length of this slp depends only on the cost $O(n^2)$ of computing the product of
$A^{-1}$ times a vector, and not on the cost of inverting $A$). Taking into account that the degrees of the
polynomials $v_1,\dots, v_{n}$ are bounded by $D$, to write the
geometric resolution in the new variables, we perform $O(n^2 D)$
operations.

Note that $(w_1(u),\dots, w_{n-m}(u))= b$ and $(q(u),w_{n-m+1}(u),
\dots, w_{n}(u))$ is a geometric resolution of the isolated points
in $V(\mathbf{g}(b, Y_{n-m+1}, \dots, Y_{n}))$ corresponding to the
isolated points in $(\C^*)^n$ of $V(\mathbf{f},L_1,\dots, L_{n-m})$.
Now, the  geometric resolution of $V^*(\mathbf{f})$ with respect to
the linear projection given by $Y$ consists of polynomials in
$\Q[Y_1,\dots, Y_{n-m},u]$ having total degrees bounded by $D$.
Therefore,  it suffices to compute the representatives of these
polynomials in $(\Q[Y_1,\dots, Y_{n-m}]/\langle Y_1-b_1,\dots,
Y_{n-m}-b_{n-m}\rangle^{D+1})[u]$. To this end, in Step
\ref{alg:NewtonGLS} we apply successively the Global Newton Iterator
from \cite{GLS01} to the polynomials $\mathbf{g}$, starting with
the geometric resolution $(q(u), w_{n-m+1}(u), \dots, w_{n}(u))$
obtained in Step \ref{alg:ChangeGR}, which can be regarded as a
representative in $(\Q[Y_1,\dots, Y_{n-m}]/\langle Y_1-b_1,\dots,
Y_{n-m}-b_{n-m}\rangle)[u]$, up to the required precision
$D=MV_{n}(\cA, \Delta^{(n-m)})$. Using \cite[Lemma 2]{GLS01} and
encoding the elements of  $\Q[Y_1, \dots, Y_{n-m}]/\langle Y_1-b_1,
\dots, Y_{n-m}-b_{n-m}\rangle^k$ as $(k+1)$-tuples of slp's (one slp
for each homogeneous component), the complexity of Step
\ref{alg:NewtonGLS} is of order $O((m L + m^{\overline \Omega}) M(D)
D^2)$.

Finally, the algorithm changes variables back in order to obtain the
desired geometric resolution of $V^*(\mathbf{f})$, which adds $O(n^2
D)$ to the complexity.

Taking into account the previous complexity estimates, we have the
following result:

\begin{prop}\label{prop:toric}
Let $\mathbf{f}=(f_1,\dots, f_m)$ be a system of $m\le n$ generic
polynomials in \linebreak $\Q[X_1,\dots, X_n]$ supported on $\cA=(\cA_1,\dots,
\cA_m)$.   \texttt{GenericToricSolve} is a probabilistic algorithm
that computes a geometric resolution of the affine variety
$V^*(\mathbf{f})$ consisting of the union of all the irreducible
components of $V(\mathbf{f})$ that have a non-empty intersection with
$(\C^*)^n$. Using the previous notation, the complexity of this
algorithm is of order $$ O\left( n^3 (N+ (n-m)n) \log(d) M(D)
(M(\mathfrak{M}) ( M(D)+ M(E)) + D^2) \right).
$$

\end{prop}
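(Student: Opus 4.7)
The plan is to assemble the justifications already sketched in the discussion preceding the proposition, relying on Lemma \ref{genericdimension} for the geometric shape of $V^*(\mathbf{f})$, on \cite[Algorithm 5.1 and Proposition 5.13]{JMSW09} for the toric resolution, and on \cite[Algorithm 1 and Lemma 2]{GLS01} for the Newton--Hensel lifting. Correctness splits into two cases. If the fine mixed subdivision of $(\cA, \Delta^{(n-m)})$ has no mixed cell, then $D := MV_n(\cA, \Delta^{(n-m)}) = 0$ and Lemma \ref{genericdimension} forces a generic system with these supports to have no common root in $(\C^*)^n$; for generic $L_1,\ldots,L_{n-m}$ this implies $V^*(\mathbf{f}) = \emptyset$, so the output $R = \emptyset$ of Step \ref{alg:empty} is correct. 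Otherwise, Lemma \ref{genericdimension} shows that $V^*(\mathbf{f})$ is equidimensional of dimension $n-m$ and degree $D$, and that for a generic choice of $A$ and $b$ the intersection $V^*(\mathbf{f}) \cap V(L_1, \ldots, L_{n-m})$ consists of exactly $D$ distinct points, all lying in $(\C^*)^n$.

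Step \ref{alg:ToricSolve0} then produces a geometric resolution of these $D$ torus points via \cite[Algorithm 5.1]{JMSW09}. The generic linear change of variables $Y = A X$ carried out in Steps \ref{alg:slp_g} and \ref{alg:ChangeGR} puts $A \cdot V^*(\mathbf{f})$ in Noether position with respect to the projection onto the first $n-m$ coordinates, with the fiber over $(b_1,\ldots,b_{n-m})$ unramified. Hence the Global Newton Iterator of \cite[Algorithm 1]{GLS01} applied in Step \ref{alg:NewtonGLS} lifts the fiber resolution to a power-series resolution of $A \cdot V^*(\mathbf{f})$ in $Y_1 - b_1, \ldots, Y_{n-m} - b_{n-m}$; since every coefficient of the sought geometric resolution has total degree at most $D$ in $Y_1,\ldots,Y_{n-m}$, truncating modulo order $D+1$ recovers it exactly, which is why the precision $\kappa = D$ suffices. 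Step \ref{alg:ChangeGR2} undoes the change of variables and yields the geometric resolution of $V^*(\mathbf{f})$.

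For the complexity, I would add up the bounds recalled in the text preceding the proposition. Step \ref{alg:ToricSolve0} costs $O\bigl((n^3 (N + (n-m)n)\log d + n^{1+\Omega}) M(D) M(\mathfrak{M})(M(D) + M(E))\bigr)$ by \cite[Proposition 5.13]{JMSW09}; matrix inversion and slp construction in Step \ref{alg:slp_g} cost $O(n^\Omega + L)$ with $L = O(n^2 + n\log(d) N)$; Steps \ref{alg:ChangeGR} and \ref{alg:ChangeGR2} contribute $O(n^2 D)$ each; and the lifting in Step \ref{alg:NewtonGLS} costs $O\bigl((m L + m^{\overline \Omega}) M(D) D^2\bigr)$ by \cite[Lemma 2]{GLS01}. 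Since $m \le n$ and $L = O(n^2 + n\log(d) N)$, the factor $m L + m^{\overline \Omega}$ is absorbed into $n^3 (N + (n-m)n)\log(d)$, and the same absorption removes the $n^{1+\Omega}$ term coming from Step \ref{alg:ToricSolve0}; summing yields the announced order.

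The main technical issue is the genericity verification for the random pair $(A, b)$: the three properties used above (the $D$ intersection points being distinct and toric, the Noether normalization in the $Y$-coordinates, and unramifiedness of the fiber over $b$) each fail only on a proper Zariski-closed subset of $\Q^{n^2+n}$ whose defining polynomials have degrees bounded in terms of $D$ and the input data. The Schwartz--Zippel argument mentioned in Remark \ref{rem:prob} therefore controls the overall error probability without affecting the asymptotic complexity.
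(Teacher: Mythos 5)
Your proof is correct and follows essentially the same route as the paper: correctness via Lemma \ref{genericdimension} and the JMSW09 toric solver followed by the GLS01 Global Newton lifting (with precision $D$ justified by the total-degree bound on the geometric resolution's coefficients), and complexity obtained by summing the same per-step bounds and absorbing the $n^{1+\Omega}$, $mL$, and $m^{\overline\Omega}$ factors into $n^3(N+(n-m)n)\log d$.
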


\subsection{Affine components} \label{sec:affinecomps}

\subsubsection{Theoretic results}

This section is devoted to showing a combinatorial description of
the equidimensional decomposition of the affine variety defined by a
\emph{generic} sparse polynomial system. More precisely, we prove
combinatorial conditions on the supports of the polynomials that
determine the existence of irreducible components of the different
possible dimensions not intersecting $(\C^*)^n$ and give a
combinatorial characterization of the set of linear subspaces where
these components lie. This characterization enables us to give a
combinatorial formula for the degree of these varieties.

\medskip

The following example shows that generic square sparse systems may
define affine varieties containing positive dimensional components.
It also shows that neither the mixed volume nor the stable mixed
volume of the system are upper bounds for the degree of the  affine
variety defined:

\begin{exmp}
\label{ejemplo1}{\rm Consider a generic sparse system supported on $\cA
= ( \cA_1 , \cA_2 , \cA_3 )$ where $\cA_1 = \{(1,1,2), (1,1,1)\}$,
$\cA_2 = \{(2,0,1), (1,0,1)\}$ and $\cA_3 = \{(0,2,1), (0,1,1)\}$:
$$\begin{cases} a X_1X_2X_3^2 + b  X_1X_2X_3 = 0 \\
c X_1^2X_3 + d  X_1X_3 = 0 \\ e X_2^2X_3 + f  X_2X_3 = 0
\end{cases}$$
with $a,b,c,d,e,f$  nonzero complex numbers. The affine variety defined by the system has $5$ irreducible
components of degree $1$: $\{ x_3 = 0 \}$, $\{x_1 = 0, x_2=-
\frac{f}{e} \}$,$\{ x_1=- \frac{d}{c}, x_2 = 0 \}$,  $\{x_1 = 0,
x_2=0 \}$ and  $\{(-\frac{d}{c},-\frac{f}{e},-\frac{b}{a}) \}$.
However, we have that $MV_3(\cA_1, \cA_2, \cA_3) = 1$ and   $SM_3(\cA_1, \cA_2,
\cA_3)\le MV_3(\cA_1 \cup \{0\}, \cA_2\cup \{0\}, \cA_3\cup \{0\}) =
4$.}
\end{exmp}

Let $\mathbf{f} = (f_1, \dots, f_s)$ be generic polynomials in the
variables $X=(X_1,\dots, X_n)$ supported on $\cA= (\cA_1,\dots,
\cA_s)\subset (\Z_{\ge 0}^n)^s$.

For $I \subset \{1,\dots,n\}$, we define $$J_{I} = \{ j \in
\{1,\dots, s\} \mid f_j |_{\bigcap_{i \in I}\{ x_i = 0 \}}
\not\equiv 0 \},$$ that is,  the set of indices of the polynomials
in $\mathbf{f}$ that do not vanish identically under the
specialization $X_i=0$ for every $i\in I$, and
$$ \mathbf{f}_I = ((f_j)_I)_{j \in J_I}$$ where, for a polynomial $f \in \C[X_1,\dots,X_n]$,
$f_I$ denotes the polynomial in $\C[(X_i)_{i \notin I}]$ obtained
from $f$ by specializing $X_i= 0$ for every $i \in I$. Namely,
$\mathbf{f}_I$ is the set of polynomials obtained by specializing
the variables indexed by $I$ to 0 in the polynomials in $\mathbf{f}$
and discarding the ones that vanish identically. We denote by
$\cA_j^I$ the support of $(f_j)_I$, by $\pi_I: \C^n \to \C^{n-\#I}$
the projection $\pi_I(x_1,\dots, x_n) = (x_i)_{i \notin I}$ onto the
coordinates not in $I$ and by $\varphi_I: \C^{n-\#I}\to \C^n$ the
map that inserts zeroes in the coordinates indexed by $I$.

For an irreducible subvariety $W$ of $V(\mathbf{f})= \{ x \in \C^n
\mid f_j( x) = 0  \hbox{ \ for every } 1 \le j \le s \}$, let
$$I_W =\{i \in \{1,\dots, n\} \mid W \subset \{ x_i = 0 \} \}.$$

\begin{lem}\label{lemma:IW}
Under the previous assumptions, let $W$ be an irreducible component
of $V(\mathbf{f})$. Then $\dim W = n - \# I_W - \# J_{I_W}.$
Moreover $\pi_{I_W}(W)$ is an irreducible component of
$V(\mathbf{f}_{I_W}) \subset \C^{n-\#I_W} $ intersecting
$(\C^*)^{n-\#I_W}$.
\end{lem}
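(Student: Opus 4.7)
The plan is to exploit the constrained support of $W$. By definition of $I_W$, $W$ is contained in the linear subspace $L_{I_W} := \{x_i = 0 : i \in I_W\}$ but in none of the coordinate hyperplanes $\{x_j = 0\}$ for $j \notin I_W$. Since $\pi_{I_W}$ restricted to $L_{I_W}$ is an isomorphism onto $\C^{n-\#I_W}$ with inverse $\varphi_{I_W}$, the image $\pi_{I_W}(W)$ is an irreducible subvariety of $\C^{n-\#I_W}$ with $\dim \pi_{I_W}(W) = \dim W$, and minimality of $I_W$ gives that $\pi_{I_W}(W)$ meets $(\C^*)^{n-\#I_W}$ in a Zariski-open dense subset.

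First I would verify the inclusion $\pi_{I_W}(W) \subset V(\mathbf{f}_{I_W})$: for every $x \in W$ and every $j \in J_{I_W}$ one has $(f_j)_{I_W}(\pi_{I_W}(x)) = f_j(x) = 0$, using that $x_i = 0$ for $i \in I_W$. Because $\pi_{I_W}(W)$ is irreducible and hits the torus, it is contained in a unique irreducible component $C$ of $V(\mathbf{f}_{I_W})$, and this $C$ belongs to $V^*(\mathbf{f}_{I_W})$.

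Next I would establish the reverse containment by lifting $C$ back with $\varphi_{I_W}$. For $j \in J_{I_W}$, $f_j$ vanishes on $\varphi_{I_W}(C)$ because $(f_j)_{I_W}$ vanishes on $C$ and the evaluation $f_j \circ \varphi_{I_W}$ equals $(f_j)_{I_W}$. For $j \notin J_{I_W}$, $(f_j)_{I_W} \equiv 0$ by definition, so $f_j$ vanishes identically on $L_{I_W}$ and hence on $\varphi_{I_W}(C)$. Thus $\varphi_{I_W}(C) \subset V(\mathbf{f})$, it is irreducible (as $\varphi_{I_W}$ is a closed immersion), and it contains $W$; maximality of the component $W$ forces $W = \varphi_{I_W}(C)$, so $\pi_{I_W}(W) = C$ is an irreducible component of $V^*(\mathbf{f}_{I_W})$ meeting $(\C^*)^{n-\#I_W}$. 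I would then apply Lemma \ref{genericdimension} to the system $\mathbf{f}_{I_W}$ in the variables $(X_i)_{i \notin I_W}$: since its coefficients form a subfamily of the (generic) coefficients of $\mathbf{f}$, it is itself generic, and the existence of a toric solution forces both $\#J_{I_W} \le n - \#I_W$ and the combinatorial condition $\dim(\sum_{j \in J} \cA_j^{I_W}) \ge \#J$ for every $J \subseteq J_{I_W}$. The second part of that lemma then yields that $V^*(\mathbf{f}_{I_W})$ is equidimensional of dimension $n - \#I_W - \#J_{I_W}$, producing the claimed formula $\dim W = \dim C = n - \#I_W - \#J_{I_W}$.

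The main subtlety is the propagation of genericity. The single open dense parameter set hosting $\mathbf{f}$ must be chosen so that, for every $I \subseteq \{1,\dots,n\}$, the specialized subsystem $\mathbf{f}_I$ is simultaneously generic enough to invoke Lemma \ref{genericdimension}. Because there are only finitely many such subsets $I$, intersecting the corresponding finitely many open dense conditions keeps the hypothesis open and dense, and the conclusion then applies uniformly to every irreducible component $W$ of $V(\mathbf{f})$.
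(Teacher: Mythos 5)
Your proof is correct and follows essentially the same route as the paper: show $\pi_{I_W}(W)\subset V(\mathbf{f}_{I_W})$, take an irreducible component $C\supset\pi_{I_W}(W)$, lift $C$ back into $V(\mathbf{f})$ via $\varphi_{I_W}$ (the paper writes this as $C\times\{\mathbf{0}\}$ after a WLOG reordering of coordinates), conclude $W=\varphi_{I_W}(C)$ by maximality of $W$, observe that irreducibility of $W$ forces it to meet $\bigcap_{j\notin I_W}\{x_j\ne 0\}$, and then read off the dimension from Lemma \ref{genericdimension}. You are a bit more explicit than the paper about two points the paper leaves implicit — that $f_j$ vanishes on $\varphi_{I_W}(C)$ also for $j\notin J_{I_W}$ because $(f_j)_{I_W}\equiv 0$, and that genericity of $\mathbf{f}$ can be arranged to propagate to all $2^n$ specialized systems $\mathbf{f}_I$ simultaneously — both of which are worthwhile clarifications. (Minor wording quibble: it is \emph{maximality}, not minimality, of $I_W$ that guarantees $W$ meets the torus in the coordinates outside $I_W$.)
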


\begin{proof}
Without loss of generality, we may assume that $I_W=\{r+1,\dots,
n\}$ and $J_{I_W}= \{1,\dots, m\}$ for some $r> 0$ and $m\le n$.
Then, $\pi:=\pi_{I_W}: \C^n\to \C^{r}$ is the projection to the
first $r$ coordinates and $\mathbf{f}_{I_W} =(f_1(x_1,\dots,
x_r,\mathbf{0}),\dots, f_m(x_1,\dots, x_r,\mathbf{0}))$, where
$\mathbf{0}$ is the origin of $\C^{n-r}$.

Note that $W= \pi(W) \times \{ \mathbf{0}\}$ and $\pi(W) \subset
V(\mathbf{f}_{I_W})$. If $\pi(W) \subset C\subset V(\mathbf{f}_{I_W})$ for
an irreducible component $C$ of $V(\mathbf{f}_{I_W})$, it follows that
$W \subset C\times \{ \mathbf{0}\} \subset V(\mathbf{f})$, with
$C\times \{ \mathbf{0}\}$ an irreducible variety. Since $W$ is an
irreducible component of $V(\mathbf{f})$, the equality $W = C\times
\{ \mathbf{0}\}$ holds. This implies that $\pi(W) = C$ is an
irreducible component of $V(\mathbf{f}_{I_W})$.

In addition, by the definition of $I_W$, we have that $W\cap
\left(\bigcap_{i=1}^r \{x_i\ne 0\}\right) \ne \emptyset $:
otherwise, $W\subset \bigcup_{i=1}^r \{x_i=0\}$, which implies that
$W\subset \{x_i=0\}$ for some $1\le i\le r$ since $W$ is an
irreducible variety. Therefore, $\pi(W) \cap (\C^*)^r\ne \emptyset$.

By Lemma \ref{genericdimension}, we conclude that $\pi(W) \cap
(\C^*)^r$ has dimension $r-m$ and so, $\dim(W) =  \dim(\pi(W))=
n-\#I_W - \#J_{I_W}$.
\end{proof}

The previous lemma allows us to prove
that a result established for binomials in \cite[Theorem 2.6]{CD07} also holds for arbitrary polynomials.

\begin{prop}
With our previous notation, assuming that $s= n$ and $0\in V(\mathbf{f})$, we have that $V(\mathbf{f})$ consists only of isolated
points if and only if for every $I\subset \{1, \dots,n\}$, $\# I + \# J_{I} \ge n$.
\end{prop}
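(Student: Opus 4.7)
The plan is to prove the two directions separately. The implication ``$\Leftarrow$'' is an immediate consequence of the dimension formula supplied by Lemma \ref{lemma:IW}, while the converse requires producing an explicit positive-dimensional subvariety of $V(\mathbf{f})$, for which the hypothesis $0\in V(\mathbf{f})$ plays a crucial role.

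For ``$\Leftarrow$'', assume that $\#I + \#J_I \ge n$ for every $I\subset \{1,\dots,n\}$ and let $W$ be any irreducible component of $V(\mathbf{f})$. Applying Lemma \ref{lemma:IW} with the particular subset $I_W$, one obtains $\dim W = n - \#I_W - \#J_{I_W} \le 0$, so $W$ reduces to a single point. Thus every irreducible component of $V(\mathbf{f})$ is isolated.

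For ``$\Rightarrow$'', I would argue by contrapositive. Suppose there exists $I\subset\{1,\dots,n\}$ with $\#I + \#J_I < n$ and consider the coordinate subspace $L_I = \{x\in\C^n : x_i=0 \text{ for all } i\in I\}$, which is naturally isomorphic to $\C^{n-\#I}$ via $\varphi_I$. By the very definition of $J_I$, those polynomials $f_j$ with $j\notin J_I$ vanish identically on $L_I$, whereas for $j\in J_I$, the restriction of $f_j$ to $L_I$ coincides (under the identification) with $(f_j)_I$. Consequently
$$V(\mathbf{f})\cap L_I \;=\; \varphi_I(V(\mathbf{f}_I)),$$
where $V(\mathbf{f}_I)\subset \C^{n-\#I}$ is cut out by exactly $\#J_I$ polynomials. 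Since $0\in V(\mathbf{f})$, the origin belongs to $V(\mathbf{f}_I)$, so this variety is nonempty; Krull's height theorem then yields that every irreducible component of $V(\mathbf{f}_I)$ has dimension at least $(n-\#I) - \#J_I > 0$. Taking $W_0$ to be the irreducible component of $V(\mathbf{f}_I)$ containing the origin, the image $\varphi_I(W_0)$ is an irreducible positive-dimensional subvariety of $V(\mathbf{f})$, so $V(\mathbf{f})$ is not zero-dimensional.

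The only subtle point is that one must actually exhibit an honest positive-dimensional irreducible subset of $V(\mathbf{f})$ rather than merely extract a lower bound on the dimension of some auxiliary object; the hypothesis $0\in V(\mathbf{f})$ plays exactly this role, as it guarantees that $V(\mathbf{f}_I)$ is nonempty and Krull's inequality is not vacuous. I do not expect any other genuine obstacle: both directions follow by combining Lemma \ref{lemma:IW} with a single application of the principal ideal theorem.
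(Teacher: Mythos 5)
Your proposal is correct and takes essentially the same route as the paper: one direction by Lemma \ref{lemma:IW} applied to the set $I_W$ of an irreducible component $W$, the other by noting $0\in V(\mathbf{f}_I)$ so Krull's height theorem gives $\dim V(\mathbf{f}_I)\ge (n-\#I)-\#J_I>0$ and then pushing forward via $\varphi_I$. You merely spell out the identification $V(\mathbf{f})\cap L_I=\varphi_I(V(\mathbf{f}_I))$ and phrase one direction directly rather than contrapositively, which is a cosmetic difference.
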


\begin{proof} Assume that there exists  $I\subset \{1, \dots,n\}$ such that $\# I + \# J_{I} < n$. Since $0 \in V(\mathbf{f}_I)\subset \C^{n-\#I}$ and this variety is defined by $\#J_I$ polynomials in $n-\# I$ variables, it follows that $\dim (V(\mathbf{f}_I)) \ge n-\#I- \#J_I>0$. Taking into account that $V(\mathbf{f}) \supseteq \varphi_I(V(\mathbf{f}_I))$, we conclude that $\dim(V(\mathbf{f}))>0$.

Conversely, if $\dim(V(\mathbf{f}))>0$ and $W$ is a positive dimensional irreducible component of $V(\mathbf{f})$, by Lemma \ref{lemma:IW},
$n-\#I_W- \#J_{I_W}>0$.
\end{proof}

Now we will characterize the sets $I\subset\{1,\dots, n\}$ such that
the irreducible components of $V(\mathbf{f}_I)$ intersecting
$(\C^*)^{n-\# I}$ yield irreducible components of $V(\mathbf{f})$.

\begin{prop}\label{prop:components}
Under the previous assumptions, let $I \subset \{1,\dots,n\}$. Then
$V(\mathbf{f}_I ) \cap (\C^*)^{n-\#I}$ is not empty if and only if
for every $J \subset J_I$, $\dim ( \sum_{j \in J} \cA_j^I) \ge \#J$
and, in this case, $V(\mathbf{f}_I) \cap (\C^*)^{n-\#I}$ is an
equidimensional variety of dimension $n - \#I-\#J_I$. In addition,
if  $W$ is an irreducible component of $V(\mathbf{f}_I ) \cap
(\C^*)^{n-\#I}$, we have that $\varphi_I(W)$ is an irreducible
component of $V(\mathbf{f}) \cap \bigcap_{i\notin I} \{x_i \ne 0\}$
if and only if for every $\widetilde{I}\subset I$, $\#\widetilde{I}
+ \#J_{\widetilde{I}} \ge \#{I} + \#J_{I}$.
\end{prop}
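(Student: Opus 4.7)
The first two assertions will follow directly from Lemma~\ref{genericdimension} applied to $\mathbf{f}_I$: by construction, this is a generic sparse system of $\#J_I$ polynomials in the $n-\#I$ variables $(X_i)_{i\notin I}$ with supports $(\cA_j^I)_{j\in J_I}$, and the combinatorial criterion in the lemma translates directly into the one stated here, with the resulting dimension $(n-\#I)-\#J_I$.

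For the third assertion, my first move would be to replace the quasi-projective statement with an affine one: $\varphi_I(W)$ is an irreducible component of $V(\mathbf{f})\cap\bigcap_{i\notin I}\{x_i\ne 0\}$ if and only if $\varphi_I(\overline W)$ is an irreducible component of $V(\mathbf{f})$, where $\overline W$ is the Zariski closure of $W$ in $\C^{n-\#I}$. This holds because $\varphi_I$ restricts to a homeomorphism of $\overline W$ onto a closed subset of $\C^n$, and $\varphi_I(W)$ is open and dense in the irreducible closed set $\varphi_I(\overline W)\subset V(\mathbf{f})$. A useful preliminary observation is that any irreducible component $Y$ of $V(\mathbf{f})$ containing $\varphi_I(\overline W)$ automatically satisfies $I_Y\subset I$, since $\varphi_I(W)\subset\{x_i\ne 0\}$ for every $i\notin I$. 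The direction $(\Leftarrow)$ is then straightforward: take $Y\supset\varphi_I(\overline W)$ the component of $V(\mathbf{f})$; Lemma~\ref{lemma:IW} gives $\dim Y=n-\#I_Y-\#J_{I_Y}$, and the hypothesis applied at $\widetilde I=I_Y\subset I$ forces $\dim Y\le n-\#I-\#J_I=\dim\varphi_I(\overline W)$, so $Y=\varphi_I(\overline W)$ by irreducibility.

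The $(\Rightarrow)$ direction is the main obstacle, and I would argue by contradiction. Given some $\widetilde I\subsetneq I$ with $\#\widetilde I+\#J_{\widetilde I}<\#I+\#J_I$, the goal is to exhibit an irreducible subvariety of $V(\mathbf{f})$ strictly containing $\varphi_I(\overline W)$, which would contradict maximality. The construction is natural: project $\varphi_I(\overline W)$ via $\pi_{\widetilde I}$ into $\C^{n-\#\widetilde I}$ (noting that $\pi_{\widetilde I}$ is injective on $\varphi_I(\overline W)$, so the projection has the same dimension $\dim W$); choose an irreducible component $\widetilde W'$ of $V(\mathbf{f}_{\widetilde I})$ containing $\pi_{\widetilde I}(\varphi_I(\overline W))$; and finally lift $\widetilde W'$ back via $\varphi_{\widetilde I}$, checking that $\varphi_{\widetilde I}(\widetilde W')\subset V(\mathbf{f})$ (the polynomials in $\mathbf{f}\setminus\mathbf{f}_{\widetilde I}$ vanish identically on $\bigcap_{i\in\widetilde I}\{x_i=0\}$). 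The delicate step, which I expect to be the real technical point, is to lower-bound $\dim\widetilde W'$ \emph{without} invoking any combinatorial condition or genericity for $\mathbf{f}_{\widetilde I}$ (which we have no reason to assume). This is where Krull's principal ideal theorem enters: since $V(\mathbf{f}_{\widetilde I})\subset\C^{n-\#\widetilde I}$ is cut out by $\#J_{\widetilde I}$ equations, $\dim\widetilde W'\ge n-\#\widetilde I-\#J_{\widetilde I}$, and the violated inequality gives this value strictly greater than $\dim W=\dim\pi_{\widetilde I}(\varphi_I(\overline W))$. The resulting strict containment $\widetilde W'\supsetneq\pi_{\widetilde I}(\varphi_I(\overline W))$ then yields $\varphi_{\widetilde I}(\widetilde W')\supsetneq\varphi_I(\overline W)$ inside $V(\mathbf{f})$, completing the contradiction.
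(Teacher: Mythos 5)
Your proof is correct and takes essentially the same route as the paper's: the first two assertions come from Lemma~\ref{genericdimension} applied to $\mathbf{f}_I$, the implication assuming the inequalities uses Lemma~\ref{lemma:IW} on a component of $V(\mathbf{f})$ containing $\varphi_I(\overline W)$ (noting $I_Y\subset I$), and the converse rests on the Krull height bound $\dim\widetilde W'\ge n-\#\widetilde I-\#J_{\widetilde I}$ for a component of $V(\mathbf{f}_{\widetilde I})$ containing $\pi_{\widetilde I}(\varphi_I(\overline W))$. The paper phrases both halves as contrapositives and leaves the passage between the quasi-affine and affine notions of component implicit, whereas you make that reduction and the appeal to Krull explicit, but the underlying argument is the same.
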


\begin{proof}
The first statement of the Proposition follows from Lemma
\ref{genericdimension}.

Without loss of generality, assume that $I=\{r+1,\dots, n\}$. Let
$W$ be an irreducible component of $V(\mathbf{f}_I) \cap (\C^*)^r$.

Suppose that for a subset $\widetilde I\subset I$ the inequality
$\#\widetilde I + \# J_{\widetilde I} < \# I + \# J_I$ holds. Assume
$\widetilde I=\{\tilde r +1,\dots, n\}$ for $\tilde r>r$.

Note that if $\xi= (\xi_1,\dots, \xi_r)\in V(\mathbf{f}_I)$, then
$(\xi, \mathbf{0}_{n-r})\in V(\mathbf{f})$ and, therefore,  $(\xi,
\mathbf{0}_{\tilde r-r})\in V(\mathbf{f}_{\widetilde I})$. Thus,  we
may consider $W\times \{ \mathbf{0}_{\tilde r - r}\} \subset
V(\mathbf{f}_{\widetilde I})$, which will be included in an
irreducible component $\widetilde W $ of $ V(\mathbf{f}_{\widetilde
I})$. Taking into account that $\mathbf{f}_{\widetilde I}$ consists
of $\#J_{\widetilde I}$ polynomials in $n-\#\widetilde I$ variables
and applying Lemma \ref{genericdimension} to $W$ and $\mathbf{f}_I$,
it follows that
$$\dim(\widetilde W) \ge n-\#\widetilde I-\#J_{\widetilde I} > n-\#I - \#J_{I} = \dim(W).$$
We conclude that $\varphi_I(W) = W\times \{ \mathbf{0}_{n-r}\}
\subsetneq \widetilde W\times \{ \mathbf{0}_{n-\tilde r}\}  \subset
V(\mathbf{f})$ and, therefore, $\varphi_I(W)$ is not an irreducible
component of $V(\mathbf{f})\cap \bigcap_{i\notin I} \{x_i \ne 0 \}$.

Conversely, if $\varphi_I(W) =W\times \{ \mathbf{0}_{n-r}\}$ is not
an irreducible component of $V(\mathbf{f})$, there is an irreducible
component $\widetilde W$ of this variety such that $W\times \{
\mathbf{0}_{n-r}\}\subsetneq \widetilde W$. The previous inclusion
implies that $I_{\widetilde W}\subset I$. Assume $ I_{\widetilde W}
= \{ \tilde r +1,\dots, n\}$ for $\tilde r > r$. Due to Lemma
\ref{lemma:IW}, $\pi_{I_{\widetilde W}} (\widetilde W)$ is an
irreducible component of $V(\mathbf{f}_{I_{\widetilde W}})$ having a
non-empty intersection with $(\C^*)^{\tilde r}$. Therefore,
$$ n-\# I_{\widetilde W} - \# J_{I_{\widetilde W}} = \dim(\pi_{I_{\widetilde W}} (\widetilde W))=\dim(\widetilde W)  > \dim W = n-\#I - \#J_I,$$
and so, $\#I + \#J_I > \# I_{\widetilde W} + \# J_{I_{\widetilde
W}}$.
\end{proof}

As a consequence of Proposition \ref{prop:components}, we have that
the irreducible components of $V(\mathbf{f})\subset \C^n$ are
contained in the linear subspaces $\bigcap_{i\in I}\{x_i =0\}$
associated to the subsets $I\subset \{1,\dots, n\}$ in
$$ \Gamma=\Big\{ I
\subset \{1, \dots, n\} \mid
 \forall J \subset J_I, \ \dim(\sum\limits_{j \in
J}\cA^I_j)\geq\#J; \ \forall \widetilde I\subset I,\ \#J_{\widetilde
I}+\#\widetilde I\geq \#J_I+\#I\Big\}.$$

Note that there may be sets $I_1\subsetneq I_2 \subset \{1,\dots,
n\}$ both in $\Gamma$, as it can be seen in Example \ref{ejemplo1},
where the three sets $\{1 \} $, $\{ 2 \}$ and $\{1,2\}$ give
irreducible components of the variety.

\bigskip

If we write $V^*(\mathbf{f}_I)$ to denote the union of all the
irreducible components of $V(\mathbf{f}_I)$ having a non-empty
intersection with $(\C^*)^{n-\# I}$, from Lemma \ref{lemma:IW} and
Proposition \ref{prop:components}, we deduce that, for every $I\in
\Gamma$,
$$\varphi_I(V^*(\mathbf{f}_I))=\bigcup_{W \textrm{ irred.~comp.~of }V(\mathbf{f}) \atop \textrm{such that } I_W =I} W .$$
We obtain the following characterization of the equidimensional
decomposition of $V(\mathbf{f})$ and, using Lemma
\ref{genericdimension}, a combinatorial expression for the degree of
$V(\mathbf{f})$:

\begin{thm}
\label{teoremacomb} Let $\mathbf{f} = (f_1, \dots, f_s)$ be generic
polynomials in $n$ variables supported on $\cA= (\cA_1,\dots,
\cA_s)\subset (\Z_{\ge 0}^n)^s$. For $k=0,\dots, n$, let
$V_k(\mathbf{f})$ be the equidimensional component of dimension $k$
of $V(\mathbf{f})$. Then, using the previous notations:
$$ V_k(\mathbf{f}) = \bigcup_{I \in\Gamma, \atop \# I + \# J_I = n-k} \varphi_I(V^*(\mathbf{f}_I)) .$$
Moreover, $\deg( V(\mathbf{f})) = \sum_{I\in \Gamma} MV_{n-\#I}(\cA^I,
\Delta^{(n-\# I - \# J_I)})$.
\end{thm}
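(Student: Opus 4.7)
The plan is to set up a bijection between irreducible components $W$ of $V(\mathbf{f})$ and pairs $(I,W')$ with $I\in\Gamma$ and $W'$ an irreducible component of $V^*(\mathbf{f}_I)$, the map being $W\mapsto(I_W,\pi_{I_W}(W))$ with inverse $(I,W')\mapsto \varphi_I(W')$. Given such a correspondence, the two assertions of the theorem follow by grouping components of fixed dimension and by summing degrees.

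First I would start with an arbitrary irreducible component $W$ of $V(\mathbf{f})$ and apply Lemma \ref{lemma:IW}: $\pi_{I_W}(W)$ is an irreducible component of $V(\mathbf{f}_{I_W})$ meeting $(\C^*)^{n-\#I_W}$, so it lies in $V^*(\mathbf{f}_{I_W})$, and $W=\varphi_{I_W}(\pi_{I_W}(W))$ with $\dim W = n-\#I_W-\#J_{I_W}$. To show $I_W\in\Gamma$, I would invoke Proposition \ref{prop:components}: its first combinatorial condition is exactly the criterion for $V(\mathbf{f}_{I_W})\cap(\C^*)^{n-\#I_W}$ to be non-empty, which is granted by the existence of $\pi_{I_W}(W)$; its second condition is the criterion for $\varphi_{I_W}(\pi_{I_W}(W))$ to be an irreducible component of $V(\mathbf{f})\cap\bigcap_{i\notin I_W}\{x_i\neq 0\}$, which is immediate from $W$ being an irreducible component of $V(\mathbf{f})$ that is not contained in $\{x_j=0\}$ for any $j\notin I_W$ (by definition of $I_W$).

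Conversely, for each $I\in\Gamma$, Proposition \ref{prop:components} ensures that $V^*(\mathbf{f}_I)$ is equidimensional of dimension $n-\#I-\#J_I$ and that for every irreducible component $W'$ of $V^*(\mathbf{f}_I)$, the image $\varphi_I(W')$ is an irreducible component of $V(\mathbf{f})\cap\bigcap_{i\notin I}\{x_i\neq 0\}$. Since $\varphi_I(W')$ is contained in $\bigcap_{i\in I}\{x_i=0\}$ and meets $\bigcap_{i\notin I}\{x_i\neq 0\}$, any irreducible component of $V(\mathbf{f})$ containing it must have $I_W=I$, and applying Lemma \ref{lemma:IW} in that direction forces equality, so $\varphi_I(W')$ is itself an irreducible component of $V(\mathbf{f})$. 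This yields the bijection. Collecting the components by the value $k=n-\#I-\#J_I$ gives the first identity of the theorem.

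For the degree formula, the partition of the irreducible components of $V(\mathbf{f})$ by $I_W$ gives
\[\deg V(\mathbf{f}) = \sum_{I\in\Gamma}\deg\bigl(\varphi_I(V^*(\mathbf{f}_I))\bigr) = \sum_{I\in\Gamma}\deg\bigl(V^*(\mathbf{f}_I)\bigr),\]
the last step because $\varphi_I$ is an isomorphism onto its image. Since $\mathbf{f}_I$ is a generic system of $\#J_I$ polynomials in $n-\#I$ variables supported on $\cA^I$ (and the first condition in $\Gamma$ forces $\#J_I\le n-\#I$ by taking $J=J_I$), Lemma \ref{genericdimension} yields $\deg V^*(\mathbf{f}_I)=MV_{n-\#I}(\cA^I,\Delta^{(n-\#I-\#J_I)})$, completing the proof. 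The main delicate point will be pinning down that the image $\varphi_I(W')$ is truly maximal as an irreducible subvariety of $V(\mathbf{f})$ and not merely of $V(\mathbf{f})\cap\bigcap_{i\notin I}\{x_i\neq 0\}$; once the second defining condition of $\Gamma$ is invoked via Proposition \ref{prop:components}, the rest of the argument is bookkeeping built on top of Lemmas \ref{genericdimension} and \ref{lemma:IW}.
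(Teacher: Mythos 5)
Your overall strategy — classify irreducible components $W$ of $V(\mathbf{f})$ by the set $I_W$ and match them, via $\pi_{I_W}$ and $\varphi_I$, with irreducible components of $V^*(\mathbf{f}_I)$ for $I\in\Gamma$, then apply Lemma \ref{genericdimension} for the degree formula — is exactly the paper's route (the paper simply records the identity $\varphi_I(V^*(\mathbf{f}_I))=\bigcup_{I_W=I}W$ as a consequence of Lemma \ref{lemma:IW} and Proposition \ref{prop:components} and leaves the bookkeeping to the reader). The forward direction and the degree computation are carried out correctly.

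There is, however, a slip precisely at the point you flagged as delicate. In the converse direction you write: ``Since $\varphi_I(W')$ is contained in $\bigcap_{i\in I}\{x_i=0\}$ and meets $\bigcap_{i\notin I}\{x_i\neq 0\}$, any irreducible component of $V(\mathbf{f})$ containing it must have $I_W=I$.'' The stated reasons only give $I_W\subseteq I$: if $\varphi_I(W')\subset W$, then $W$ meets $\bigcap_{i\notin I}\{x_i\neq 0\}$ and so $W\not\subset\{x_i=0\}$ for $i\notin I$; but nothing asserted here forces $W\subset\{x_i=0\}$ for $i\in I$, since $W$ could spill off those hyperplanes. The fact that this does not happen is exactly what the second condition defining $\Gamma$ guarantees, and it has to be invoked again here — either through the dimension count of Lemma \ref{lemma:IW} combined with $\#I_W+\#J_{I_W}\ge\#I+\#J_I$, or, more directly, by noting that if $\varphi_I(W')\subsetneq W$ then $W$ is an irreducible subvariety of $V(\mathbf{f})$ meeting $\bigcap_{i\notin I}\{x_i\neq 0\}$ and strictly larger than $\varphi_I(W')$, contradicting the maximality you already extracted from Proposition \ref{prop:components}. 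With that correction the argument closes; as written, the ``must have $I_W=I$'' step is not justified by what precedes it.
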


\medskip
\begin{exmp}\label{ex:genericdecomp} {\rm Consider the following system of generic polynomials in $\Q[X_1,
X_2, X_3, X_4]$:
$$\begin{cases} a_1 X_1X_4 + a_2 X_1^2 X_4^2+ a_3 X_1X_2X_3 +a_4 X_2X_3 = 0 \\ b_1 X_1X_2 + b_2
X_1 X_2^2+ b_3 X_1X_3X_4 +b_4 X_3X_4 + b_5 X_3 X_4^2= 0 \\  c_1 X_1
X_2 X_4 + c_2 X_1 X_3 X_4 + c_3 X_2 X_3 + c_4 X_2 X_3 X_4 = 0 \\ d_1
X_1 + d_2 X_1^2 + d_3 X_1 X_2 + d_4 X_3^2 + d_5 X_3 X_4 = 0
\end{cases}
$$
Then, with the previous notation,  $\Gamma= \{\emptyset, \{1,2\},
\{1,3\}, \{2,3\}, \{2,4\},\{3,4\} \}$ and then,
\begin{itemize}
\item
$V_0(\mathbf{f}) = V^*(\mathbf{f}) \cup \{(0,0,\frac{b_4
d_5}{b_5d_4}, -\frac{b_4}{b_5}), (-\frac{d_1}{d_2}, 0, 0, \frac{a_1
d_2}{a_2 d_1}),(-\frac{d_1}{d_2}+
\frac{b_1d_3}{b_2d_2},-\frac{b_1}{b_2} , 0, 0)\}$:
\begin{itemize}
\item For $ I = \emptyset$ we have $19$ isolated solutions in $(\C^*)^4$ (this quantity is given by the mixed volume of
the family of supports associated to the system)
\item For $I = \{1,2\}$ we have:
$$\mathbf{f}_{\{1,2\}}=\begin{cases}
b_4 X_3 X_4 + b_5 X_3 X_4^2\\
d_4 X_3^2 +d_5 X_3 X_4
\end{cases}; \quad V^*(\mathbf{f}_{\{1,2\}}) = \Big\{\Big(\frac{b_4 d_5}{b_5d_4},
-\frac{b_4}{b_5}\Big)\Big\},$$ which gives the point $(0,0,\frac{b_4
d_5}{b_5d_4}, -\frac{b_4}{b_5})$.
\item For $I = \{2,3\}$ we have:
$$\mathbf{f}_{\{2,3\}}=\begin{cases}
a_1 X_1 X_4 + a_2 X_1^2 X_4^2\\
d_1 X_1 +d_2 X_1^2
\end{cases}; \quad V^*(\mathbf{f}_{\{2,3\}}) = \Big\{\Big(-\frac{d_1}{d_2},\frac{a_1 d_2}{a_2d_1}
\Big)\Big\},$$ which gives the point $(-\frac{d_1}{d_2}, 0, 0,
\frac{a_1 d_2}{a_2 d_1})$.
\item For $I = \{3,4\}$ we have:
$$\mathbf{f}_{\{3,4\}}=\begin{cases}
b_1 X_1 X_2 + b_2 X_1 X_2^2\\
d_1 X_1 +d_2 X_1^2+ d_3 X_1 X_2
\end{cases}; \quad V^*(\mathbf{f}_{\{3,4\}}) = \Big\{\Big(-\frac{d_1}{d_2}+\frac{b_1d_3}{b_2d_2},-\frac{b_1}{b_2}
\Big)\Big\},$$ which gives the point $(-\frac{d_1}{d_2}+
\frac{b_1d_3}{b_2d_2},-\frac{b_1}{b_2} , 0, 0)$.
\end{itemize}
\item $V_1(\mathbf{f}) = \{ x\in \C^4 \mid x_2=0, x_4=0, d_1x_1 +
d_2x_1^2+d_4x_3^2=0\}$:
\begin{itemize}
\item For $I = \{2,4\}$ we have:
$$\mathbf{f}_{\{2,4\}}=\{
d_1 X_1 +d_2 X_1^2+ d_4 X_3^2; \quad V^*(\mathbf{f}_{\{2,4\}}) =
\{(x_1,x_3) \mid d_1 x_1 +d_2 x_1^2+ d_4 x_3^2=0\},$$ which gives
 the curve $\{ x_2=0, x_4=0, d_1 x_1 +d_2 x_1^2+ d_4 x_3^2=0\}$.
\end{itemize}
\item $V_2(\mathbf{f}) = \{ x\in \C^4 \mid x_1=0, x_3=0\}$:
\begin{itemize}
\item For $I = \{1,3\}$ we have:
$$ \mathbf{f}_{\{1,3\}} = \emptyset; \quad V^*(\mathbf{f}_{\{1,3\}})
= \C^2,$$ which gives the plane 
$\{ x_1=0, x_3=0\}$.
\end{itemize}
\end{itemize}}
\end{exmp}

\medskip

\begin{rem}{\rm
In the case of a generic system $\mathbf{f}= (f_1,\dots, f_s)$ in
$n$ variables such that $\cA_1=\dots = \cA_s$, we have that the sets
$I \in \Gamma$, $I\ne \emptyset$,  are all $I \subset \{1,\dots, n\}
$ such that $\# J_I = 0$ and for every $\widetilde I \subsetneq I$,
$\# J_{\widetilde I} >0$; and $\emptyset \in \Gamma$ if and only if
$\dim(\cA_1) \ge s$.

Moreover, for every $I\in \Gamma$, $I\ne \emptyset$,
$\varphi_I(V^*(\mathbf{f}_I)) = \{ x_i = 0 \ \forall\, i \in I\}$
and so, apart from the components intersecting $(\C^*)^n$ that
correspond to $I= \emptyset$ (if $\emptyset \in \Gamma$), the only
irreducible components of $V(\mathbf{f})$ are linear subspaces of
$\C^n$.}
\end{rem}

\subsubsection{Algorithmic results}

According to Proposition \ref{prop:components}, the subsets  $ I
\subset \{1, \dots, n\}$ which
 yield irreducible components of the variety $V(\mathbf{f})$ are the ones satisfying simultaneously
\begin{enumerate}
\item \label{1} $\forall J \subset J_I $,  $\dim (\sum\limits_{j \in
J}\cA^I_j)\geq\#J$,
\item \label{2} $\forall I'\subset I$, $\#J_{I'}+\#I'\geq
\#J_I+\#I.$
\end{enumerate}

\bigskip
Now we present an algorithm to obtain the sets
$I$ satisfying condition (\ref{2}) and the inequality $\#I + \# J_I\le
n$, which is a  necessary condition for the system $\mathbf{f}_I$ to
have zeroes in $(\C^*)^{n-\#I}$, weaker but easier to check than
condition (\ref{1}). Our algorithm to find all the affine components
of $V(\mathbf{f})$ (see Algorithm \texttt{GenericAffineComps} below)
checks only among these sets whether condition (\ref{1}) is fulfilled
or not by means of a mixed volume computation.

\bigskip

\noindent \hrulefill

\medskip

\noindent \textbf{Algorithm} \texttt{SpecialSets}

\medskip

\noindent INPUT: A family of supports $\cA=(\cA_1,\dots,
\cA_s)\subset (\Z_{\ge 0}^n)^s$.

\smallskip
\begin{enumerate}

\item $P_{\emptyset}:= \min\{n,s\}$.
\item If $P_{\emptyset}= s$, add $(\emptyset, \{1,\dots,s\})$ to an empty list $\widetilde \Gamma$.

\item For $k=1,\dots, n$:

For every $I$ such that $\# I = k$:
\begin{enumerate}

\item \label{alg:maximum} $P_I := \min \{ n, \{P_{ I'} \}_{I' \subset I,\, \# I' = k - 1},  k+  \#J_I\}.$

\item \label{alg:sirve} If $P_I = k + \# J_I$, add $(I,J_I)$ to the list $\widetilde \Gamma$.
\end{enumerate}
\end{enumerate}

\smallskip
\noindent OUTPUT: The list $\widetilde \Gamma$ of all pairs of
subsets $(I, J_I)$, with  $I\subset \{1,\dots, n\}$ such that $\#I + \#
J_I\le n$ and for every $ \widetilde I\subset I$, $ \# \widetilde I
+\# J_{\widetilde I} \ge \#I + \# J_I$.

\medskip
\noindent \hrulefill

\bigskip

First, let us prove the correctness of this algorithm:

\begin{lem}
For every $I\subset \{1,\dots, n\}$, $P_I =\min\limits_{\widetilde I
\subset I} \{n,   \# \widetilde I + \# J_{\widetilde I}\}$.
\end{lem}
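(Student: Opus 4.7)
The plan is a straightforward induction on $k = \# I$, combined with the elementary combinatorial observation that every proper subset of $I$ can be extended to a $(k-1)$-element subset of $I$.

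Base case: when $I = \emptyset$, the only subset is $\emptyset$ itself, and $J_{\emptyset} = \{1,\dots,s\}$ since no polynomial vanishes identically when no variables are specialized. Thus $\#\emptyset + \#J_{\emptyset} = s$, and the definition $P_{\emptyset} := \min\{n,s\}$ matches $\min_{\widetilde I \subseteq \emptyset}\{n, \#\widetilde I + \#J_{\widetilde I}\}$.

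Inductive step: fix $k \ge 1$ and assume the formula holds for every subset of $\{1,\dots,n\}$ of cardinality at most $k-1$. Let $I$ be a subset with $\#I = k$. By the definition in Step~\ref{alg:maximum},
\[
P_I = \min\Bigl\{\,n,\ \min_{I' \subset I,\, \#I' = k-1} P_{I'},\ k + \#J_I\,\Bigr\}.
\]
By the inductive hypothesis, $P_{I'} = \min_{\widetilde I \subseteq I'}\{n, \#\widetilde I + \#J_{\widetilde I}\}$ for each such $I'$. Therefore
\[
\min_{I' \subset I,\, \#I' = k-1} P_{I'} \;=\; \min\bigl\{\,n,\ \#\widetilde I + \#J_{\widetilde I} \,:\, \widetilde I \subseteq I' \text{ for some } I' \subset I \text{ with } \#I' = k-1\bigr\}.
\]
The key combinatorial point is that the collection of $\widetilde I$ appearing above is exactly the set of proper subsets of $I$: indeed, any $\widetilde I \subsetneq I$ has $\#\widetilde I \le k-1$, so we may pick any $i \in I \setminus \widetilde I$ and set $I' = I \setminus \{i\}$ to obtain a $(k-1)$-subset of $I$ containing $\widetilde I$; conversely every $\widetilde I \subseteq I'$ with $\#I' = k - 1$ is a proper subset of $I$.

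Combining this identity with the remaining term $k + \#J_I = \#I + \#J_I$, which corresponds to the choice $\widetilde I = I$, we conclude
\[
P_I = \min\Bigl\{\,n,\ \min_{\widetilde I \subsetneq I}\{\#\widetilde I + \#J_{\widetilde I}\},\ \#I + \#J_I\,\Bigr\} = \min_{\widetilde I \subseteq I}\{\,n, \#\widetilde I + \#J_{\widetilde I}\,\},
\]
as claimed. The main (and only) subtlety is the combinatorial identification described above; everything else is bookkeeping.
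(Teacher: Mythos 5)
Your proof is correct and follows essentially the same inductive strategy as the paper's, including the identical key combinatorial observation that every proper subset of $I$ extends to a $(k-1)$-element subset of $I$. You are somewhat more explicit than the paper, which only spells out the inequality $P_I \le \#\widetilde I_0 + \#J_{\widetilde I_0}$ for proper subsets $\widetilde I_0 \subsetneq I$ and leaves the reverse inequality (immediate from the definition of $P_I$ as a minimum) implicit.
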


\begin{proof} By induction on $\# I$.

For $\# I =0$, since $\# J_\emptyset=s$, we have that $P_\emptyset=
\min\{n,  \#\emptyset\ + \# J_\emptyset \}$.

Assuming the identity holds for every subset of cardinality $k-1$,
let $I\subset \{1,\dots, n\}$ with $\# I = k$. Consider a proper
subset $\widetilde I_0\subsetneq I$. Then, there exists $I'\subset
I$ with $\#I' = k-1$ such that $\widetilde I_0\subset I'$. By the
inductive assumption, $$P_{I'}= \min\limits_{ \widetilde I \subset
I'} \{ n,  \# \widetilde I+ \# J_{\widetilde I}  \}$$ and so,
$P_{I'}\le \#\widetilde I_0+ \# J_{\widetilde  I_0} $. On the other
hand, by the definition of $P_I$, we have that $P_I\le P_{I'} $.
Thus, $P_{I}\le \#\widetilde  I_0+ \# J_{\widetilde I_0}  $.
\end{proof}

Now we estimate the complexity of the algorithm. Let $N =
\sum_{j=1}^s  \# \cA_j $. For each $I$ of cardinality $k\ge 1$, it
takes $k N + \# J_I +1 $ operations to compute $k + \# J_I$. Taking
the minimum among $k+2$ numbers takes $k+1$ comparisons. Thus, the
complexity of Step \ref{alg:maximum} is  $k (N+1) +\#J_I +2$. In
Step \ref{alg:sirve}, we add one comparison. As we have to do this
for each subset of $\{1, \dots, n\}$ of cardinality $k\ge 1$ and for
the empty set,
 the complexity is bounded by $2+\sum_{k=1}^n  {n \choose k} (k (N+1)+ s +
3)= 2+n 2^{n-1}(N +1)+(2^n -1) (s+3)$ which is of order $O(nN
2^{n})$.

\bigskip

Unfortunately, the exponential complexity of the algorithm cannot be
avoided, as the following examples show:

\begin{exmp}\label{example7}  {\rm Let   $L_1,\dots, L_n \in \Q [X_1, \dots, X_n]$
be generic affine linear forms. Consider the set of generic
polynomials $ f_1 = X_1.L_1, \dots, f_n= X_n.L_n$. In this case, if
$\cA = ( \cA_1, \dots, \cA_n)$ is their family of supports, $\Gamma
= \{I \mid I \subseteq \{1, \dots, n\}\}$ and therefore $\# \Gamma =
2^n$.}
\end{exmp}

One may think the exponentiality of the cardinal of the set $\Gamma$
in this example arises from the fact that the variables are factors
of the polynomials. However, this is not always the case as the
following example shows. This example also shows how subroutine
\texttt{SpecialSets} is useful to discard subsets which do not lead
to affine components: in this case, the only element in
$\widetilde{\Gamma}$ which does not correspond to a set in $\Gamma$
is $(\emptyset, \{1, \dots, 2n\})$.

\begin{exmp}{\rm
Consider generic polynomials $f_1, \dots, f_{2n} \in \Q[X_1,\dots,
X_{2n}]$,
$$f_j= \sum_{1\le k \le n} c_{jk}\, X_{2k-1} X_{2k},\quad j=1,\dots, 2n, $$
 all supported on the
set $\cA = \{e_{2k-1}+e_{2k}  \mid  1 \le k \le n\}$ where $e_i$
denotes the $i$th vector of the canonical basis of $\R^{2n}$.  Then,
it is easy to see that, for any subset $S \subset \{1,\dots, n\}$,
the set $I_S = \{2k-1\mid k \in S \} \cup \{2k \mid k \in \{1,\dots,
n\} \setminus S \}$ is in $\Gamma$, and that the sets $I_S$  are the
only ones (together with the empty set) obtained as first coordinate
of elements of $\widetilde{\Gamma}$ by the previous algorithm.
Therefore, in this case, we have that the number of elements of the
list $\widetilde \Gamma$ is $2^n +1$.}
\end{exmp}

In the following example, the usefulness of subroutine
\texttt{SpecialSets} is more evident:

\begin{exmp}{\rm  Let $\cA= (\cA_1, \dots, \cA_n )$ be a family of finite sets of $(\Z_{\ge 0})^n$
such that, for every $1 \le j\le n$, there exists a nonnegative
integer $d_{j_i}$ such that $d_{j_i}. e_i \in \cA_j$ for every $1
\le i \le n$. Then the output of subroutine \texttt{SpecialSets} in
this case is $\widetilde{\Gamma} = \{ (\emptyset, \{1,\dots,n\})\}$.}
\end{exmp}

\bigskip

The following algorithm computes a family of geometric resolutions
describing the affine variety defined by a generic system
$\mathbf{f}$ with given supports $\cA$.

\bigskip

\noindent \hrulefill

\medskip

\noindent \textbf{Algorithm} \texttt{GenericAffineSolve}

\medskip

\noindent  INPUT: A sparse representation of the generic system
$\mathbf{f} =(f_1,\dots, f_s)$ of polynomials in $\Q[X_1, \dots,
X_n]$.

\smallskip
\begin{enumerate}
\item\label{alg:findI} Apply algorithm \texttt{SpecialSets} to the family of the supports $\cA=(\cA_1,\dots,
\cA_s)$ of $\mathbf{f}$ to obtain the list $\widetilde{\Gamma}$ of
pairs of sets $(I, J_I)$ with $I \subset\{1,\dots, n\}$ such that
$\#I+\#J_I\le n$ and $\forall \widetilde I\subset I,\ \#\widetilde
I+\#J_{\widetilde I}\geq \#I+\#J_I$.
\item For every $(I, J_I)\in \widetilde{\Gamma} $:
\begin{enumerate}
\item For $j\in J_I $, compute $\cA_j^I := \{\pi_I(a) \mid a\in \cA_j \textrm{ such that } a_i= 0 \
\forall i\in I\}$, and obtain the sparse representation of the
system $\mathbf{f}_I$ supported on $\cA^I = (\cA_j^I)_{j \in J_I}$.
\item \label{alg:applyGTS} Apply algorithm \texttt{GenericToricSolve} to the sparse system
$\mathbf{f}_I$ to obtain a geometric resolution $R^I $ (possibly
empty) of the affine components of the set of solutions of
$\mathbf{f}_I$ intersecting the torus $(\C ^* )^{n - \# I}$.
\item If $R^{I} \ne \emptyset$:
\begin{enumerate}
\item Obtain the geometric resolution $\varphi_I(R^I)$ of the union of all irreducible components $W$ of $V(\mathbf{f})$ such that $I_W = I$ by adding zeroes to $R^I$ in the coordinates indexed by $I$.
\item If $n-(\#I + \# J_I) =k$, add $\varphi_I(R^I)$ to the list $\mathcal{V}_k$.
\end{enumerate}
\end{enumerate}
\end{enumerate}

\smallskip
\noindent OUTPUT: A family of lists $\mathcal{V}_k$, $0\le k \le
n-1$, of geometric resolutions, each list either empty or describing
the equidimensional component of $V(\mathbf{f})$ of dimension $k$.

\medskip

\noindent \hrulefill

\bigskip

The correctness of this algorithm is straightforward from
Proposition \ref{prop:components} and Theorem \ref{teoremacomb}.

When applying algorithm \texttt{GenericToricSolve} in Step
\ref{alg:applyGTS}, we need a
pre-processing obtaining a fine mixed subdivision. To do so, we may apply the
dynamic enumeration procedure from \cite{MTK07}. This procedure
proved to be very efficient even for large systems, but there are no explicit complexity bounds; for this reason,
we do not include its cost in our complexity estimates.
This pre-processing, in
particular, decides whether a set $I$ satisfies $\dim(\sum_{j\in J}
\cA_j^I)\ge \# J$ for every $J \subset J_I$ and, therefore, it
discards the sets $I \in \widetilde{\Gamma} \setminus \Gamma$. For
this reason, we will only consider the complexity of the
computations for the sets $I\in \Gamma$. This complexity  can be
estimated from the complexities of the subroutines applied at the
intermediate steps (see Proposition \ref{prop:toric}) and  is of
order
$$ O\Big( \sum _{I \in \Gamma} (n-\#I)^3 \big( N_I  + (n - \#I-\# J_I) (n - \#
I)\big) \log (d_I) M(D_I) \left(M(\mathfrak{M}_I) \left(M(D_I) + M
(E_I)\right) + D_I^2\right)\Big)
$$
where, for every $I \in \Gamma$, $N_I$, $d_I$, $D_I$,
$\mathfrak{M}_I$ and $E_I$ are the parameters defined in the
complexity of Algorithm \texttt{GenericToricSolve}, associated to
the system $\mathbf{f}_I$.

In order to estimate the overall complexity of the algorithm, note
that $N_I \le N:= \sum_{j=1}^s \# \cA_j$ and $d_I \le d:= \max_{1
\le j \le s} \{\deg (f_j)\}$ for every $I \in \Gamma$. In addition,
$\mathcal{D}:= \sum_{I \in \Gamma} D_I = \deg V(\mathbf{f})$. Note
that, if $\omega_{\max} $ is the maximum value of the lifting
functions applied to the supports $\cA^I$, for every $I \in \Gamma$
we have
$$ E_I \le MV_{n-\# I+1}(\Delta \times \{0\},(\cA_j^I
\times \{0, \omega_{\max}\})_{j \in J_I}, (\Delta \times \{0,
\omega_{\max}\})^ {(n -\# I - \# J_I)})   \le$$ $$ \le \omega_{\max}
\Big((n -\# I -\#J_I) MV_{n-\# I} (\cA^I, \Delta ^{(n -\# I
-\#J_I)}) + \sum_{\ell \in J_I} MV_{n - \#I}((\cA^I_j)_{j \ne \ell},
\Delta ^{(n -\# I -\#J_I+1)})\Big).$$

 Then,
if  $\mathcal{E}_{\max}:= \max _{I \in \Gamma} \{(n -\# I -\#J_I)
MV_{n-\# I} (\cA^I, \Delta ^{(n -\# I -\#J_I)}) +{}$ \linebreak
$\sum_{\ell \in J_I} MV_{n - \#I}((\cA^I_j)_{j \ne \ell}, \Delta
^{(n -\# I -\#J_I+1)})\}$ and $\mathfrak{M}_{\max} := \max _{I \in
\Gamma}\{\mathfrak{M} _{I} \}$, taking into account the complexity
of Step \ref{alg:findI}, we have:

\begin{thm}\label{thm:gendecomp}
Let $\mathbf{f}=(f_1,\dots, f_s)$ be a system of  generic
polynomials in $\Q[X_1,\dots, X_n]$ supported on $\cA=(\cA_1,\dots,
\cA_s)$.  \texttt{GenericAffineSolve} is a probabilistic algorithm
that
 computes
a family of lists $\mathcal{V}_k$, $0\le k \le n-1$, of geometric
resolutions, each list either empty or describing the
equidimensional component of $V(\mathbf{f})$ of dimension $k$. Using
the previous notation, the complexity of this algorithm is of order
$$O\left(n 2^{n}N  + n^3 (N + n^2)\log (d) M (\mathcal{D}) \left(
M(\mathfrak{M}_{\max}) \left( M (\mathcal{D}) + M(\omega_{\max}
\mathcal{E}_{\max})\right)  +\mathcal{D}^2 \right)\right).$$
\end{thm}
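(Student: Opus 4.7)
The plan is to separate the argument into correctness and complexity, since most of the substantive mathematics has already been established in the preceding material.

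For correctness, I would invoke Theorem \ref{teoremacomb}, which tells us that
\[ V_k(\mathbf{f}) = \bigcup_{I\in \Gamma, \ \#I + \#J_I = n-k} \varphi_I(V^*(\mathbf{f}_I)) \]
for every $0\le k \le n-1$. The algorithm \texttt{SpecialSets} produces a list $\widetilde\Gamma \supseteq \Gamma$ of candidate pairs $(I,J_I)$ satisfying the combinatorial condition $(\ref{2})$ together with the necessary inequality $\#I + \#J_I \le n$. For each such $I$, applying \texttt{GenericToricSolve} to $\mathbf{f}_I$ returns (by Proposition \ref{prop:toric}) a geometric resolution of $V^*(\mathbf{f}_I)$, which is empty precisely when condition $(\ref{1})$ of Proposition \ref{prop:components} fails; this is detected via the absence of mixed cells in the fine mixed subdivision computed in the pre-processing. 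Lifting the resolution through $\varphi_I$ amounts to inserting zeros in the coordinates indexed by $I$, which preserves the geometric resolution structure. Collecting the outputs by dimension $n - \#I - \#J_I = k$ gives exactly $V_k(\mathbf{f})$.

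For the complexity, I would add up the costs of the two phases. The first phase, \texttt{SpecialSets}, was analyzed just before its statement: it contributes $O(n N 2^{n})$. The second phase runs \texttt{GenericToricSolve} once per $I \in \Gamma$ (the extra sets in $\widetilde\Gamma \setminus \Gamma$ are discarded during the mixed-subdivision pre-processing at negligible additional cost). By Proposition \ref{prop:toric}, the cost for a fixed $I$ is of order
\[ (n-\#I)^3 \bigl( N_I + (n-\#I-\#J_I)(n-\#I)\bigr) \log(d_I) \, M(D_I) \bigl( M(\mathfrak{M}_I)(M(D_I)+M(E_I)) + D_I^2\bigr). \]
To sum this over $I \in \Gamma$, I would make the uniform replacements $N_I \le N$, $d_I \le d$, $(n-\#I)^3 \le n^3$, $(n-\#I-\#J_I)(n-\#I) \le n^2$, $\mathfrak{M}_I \le \mathfrak{M}_{\max}$, and use the bound for $E_I$ displayed before the theorem, which yields $E_I \le \omega_{\max}\,\mathcal{E}_{\max}$ term by term.

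The key remaining point is to collapse $\sum_{I \in \Gamma} D_I$-type sums. Using the quasi-linearity of $M$ (namely $M(a)+M(b) \le M(a+b)$ for $a,b \ge 1$) together with the identity $\sum_{I\in\Gamma} D_I = \deg V(\mathbf{f}) = \mathcal{D}$ from Theorem \ref{teoremacomb}, one obtains
\[ \sum_{I \in \Gamma} M(D_I) \, D_I^2 \le M(\mathcal{D})\,\mathcal{D}^2, \qquad \sum_{I \in \Gamma} M(D_I)^2 \le M(\mathcal{D})^2, \]
and the analogous bound with $M(E_I)$ in the second factor. Assembling these estimates with the common prefactor $n^3 (N + n^2) \log(d)$ produces the second summand in the stated complexity, and adding the cost of \texttt{SpecialSets} yields the total
\[ O\Bigl( n 2^{n} N + n^3 (N + n^2)\log(d)\, M(\mathcal{D})\bigl( M(\mathfrak{M}_{\max})(M(\mathcal{D}) + M(\omega_{\max}\mathcal{E}_{\max})) + \mathcal{D}^2 \bigr)\Bigr). \]
The only mildly subtle point is the collapse of per-$I$ costs into a single expression involving $\mathcal{D}$, $\mathfrak{M}_{\max}$, and $\mathcal{E}_{\max}$; everything else is a bookkeeping exercise on top of Proposition \ref{prop:toric} and Theorem \ref{teoremacomb}.
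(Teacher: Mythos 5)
Your proposal is correct and follows the same route as the paper: correctness comes directly from Proposition \ref{prop:components} and Theorem \ref{teoremacomb}, and the complexity is obtained by summing the per-$I$ cost of \texttt{GenericToricSolve} (Proposition \ref{prop:toric}) over $I\in\Gamma$, bounding $N_I,d_I,\mathfrak{M}_I,E_I$ uniformly and collapsing the $D_I$-sums via $\sum_{I\in\Gamma}D_I=\mathcal{D}$, then adding the $O(nN2^n)$ cost of \texttt{SpecialSets}. Your explicit appeal to the superadditivity of $M$ to justify the collapse $\sum_I M(D_I)D_I^2\le M(\mathcal{D})\mathcal{D}^2$ (and its companions) is in fact a touch more careful than the paper, which performs the replacement $D_I\rightsquigarrow\mathcal{D}$ without comment.
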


\section{Arbitrary sparse systems}\label{sec:arbitrary}

\subsection{An upper bound for the degree}\label{sec:degreebound}

The aim of this section is to show a bound for the degree of the
affine variety defined by a square system of sparse polynomials
which takes into account its sparsity.

The following example shows that the degree of an affine variety
defined by a generic sparse system with given supports is not an
upper bound for the degree of the variety defined by a particular
system with the same supports. One may think the problem arises from
the presence of irreducible components not intersecting the torus
either for the generic or the particular systems. However, this is
not the case:

\begin{exmp}
\label{ejemplo2} {\rm Consider the following  system:
$$\begin{cases}  X_1X_2-X_1-X_2+1=(X_1-1)(X_2-1) = 0 \\
X_1X_3-X_1-X_3+1=(X_1-1)(X_3-1) = 0 \\
X_2X_3-X_2-X_3+1=(X_2-1)(X_3-1) = 0
\end{cases}$$
The variety defined by the system consists of  $3$ lines, each
having a non-empty intersection with $(\C^*)^3$. However, if $\cA =
(\cA_1, \cA_2, \cA_3)$ is the family of the supports of the
polynomials, the degree of the variety defined by a generic system
with the same supports is $MV_3(\cA_1, \cA_2, \cA_3) = 2$.}
\end{exmp}

Our bound for the degree, which is stated in the following theorem,
is the mixed volume of a family of sets obtained by enlarging the
supports of the polynomials involved.

\begin{thm}
\label{bounddegree}
 Let $\mathbf{f} = (f_1, \dots, f_n)$ be $n$
polynomials in $\C[X_1,\dots, X_n]$ supported on $\cA= (\cA_1,\dots,
\cA_n)$ and let $V(\mathbf{f}) = \{ x \in \C^n \mid f_i( x) = 0
\hbox{ \ for every  } 1 \le i \le n \}.$ Let $\Delta = \{ 0, e_1,
\dots, e_n\}$ where $e_i$ the $i$th vector of the canonical basis of
$\R^n$. Then
$$ \deg (V(\mathbf{f})) \le MV_n(\cA_1 \cup \Delta,\dots, \cA_n\cup \Delta).$$
\end{thm}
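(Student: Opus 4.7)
The plan is to combine a generic perturbation of $\mathbf{f}$---which reduces the problem to the situation covered by Theorem \ref{teoremacomb}---with an intersection-theoretic argument in a toric compactification of $\C^n$ that transfers the bound from the generic case back to $\mathbf{f}$.

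For the first step, fix generic affine linear forms $L_1, \dots, L_n$ (each supported on $\Delta$) and consider the one-parameter family $\mathbf{g}^t := (f_1 + tL_1, \dots, f_n + tL_n)$. For $t\neq 0$ each $g_i^t$ has support $\cA_i \cup \Delta$. Applied to a generic system with these supports, Theorem \ref{teoremacomb} collapses: since $\Delta$ contains $0$ and all basis vectors $e_j$, no specialization $(g_i)_I$ can vanish identically, so $J_I = \{1,\dots,n\}$ for every $I\subseteq\{1,\dots,n\}$. The dimension condition in Proposition \ref{prop:components} then requires $\#I+\#J\le n$ for every $J\subseteq J_I$; taking $J = J_I$ forces $\#I=0$, so $\emptyset$ is the only element of $\Gamma$. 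Hence the generic affine variety with supports $\cA_i\cup\Delta$ is zero-dimensional of degree $MV_n(\cA_1\cup\Delta,\dots,\cA_n\cup\Delta) =: D$, and a Bertini-type argument extends this to the perturbation: $V(\mathbf{g}^t)$ is zero-dimensional with at most $D$ points in $(\C^*)^n$ for generic $t$. This identifies $D$ as the correct target upper bound and motivates what follows.

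To actually prove $\deg V(\mathbf{f}) \le D$, I would work in a smooth projective toric variety $X$ whose fan refines the normal fans of the polytopes $Q_i := \conv(\cA_i\cup\Delta)$ and contains the affine cone corresponding to $\C^n$ as a maximal cone, so that $\C^n\subset X$ is an affine open subset and there is a regular birational morphism $\pi:X\to\P^n$. Each $f_i$ determines a Cartier divisor $V_X(f_i)$ of class $[D_{Q_i}]$, with top intersection $[D_{Q_1}]\cdots[D_{Q_n}] = MV_n(Q_1,\dots,Q_n) = D$ in $A^n(X)=\Z$ by Bernstein--Kushnirenko--Khovanskii in $X$. Since $Q_i\supseteq\conv(\Delta)$, the class $[D_{Q_i}]-\pi^*H$ is effective for each $i$, where $H$ is the hyperplane class of $\P^n$. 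The closure $\overline{V(\mathbf{f})}\subset X$ is contained in $\bigcap_i V_X(f_i)$. By Fulton's refined B\'ezout theorem applied to these $n$ divisors, each irreducible component $Y$ of $\bigcap_i V_X(f_i)$ contributes a non-negative integer to the intersection number $\prod[D_{Q_i}]=D$; the contribution of $\overline{V(\mathbf{f})}$, computed via the top Chern class of the excess normal bundle, can be expanded using $D_{Q_i}=\pi^*H+E_i$ with $E_i$ effective, whence it dominates $[\overline{V(\mathbf{f})}]\cdot(\pi^*H)^{\dim V(\mathbf{f})} = \deg_{\P^n}\overline{V(\mathbf{f})} = \deg V(\mathbf{f})$. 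Since all other contributions are non-negative, the desired inequality $\deg V(\mathbf{f}) \le D$ follows. The main obstacle is precisely this last step, requiring careful bookkeeping of the excess-intersection multiplicities along $\overline{V(\mathbf{f})}$ (which may be non-equidimensional) and the boundary components of $\bigcap V_X(f_i)$ lying in $X\setminus\C^n$; the smoothness and fan-compatibility of $X$ are essential to this computation, and an alternative route---replacing Fulton's machinery by a direct polyhedral-homotopy conservation-of-number argument in the family $\mathbf{g}^t$---is available if one wishes to avoid toric intersection theory.
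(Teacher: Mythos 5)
Your first paragraph is a correct sanity check (a generic system supported on $\cA_i\cup\Delta$ has $J_I=\{1,\dots,n\}$ for every $I$, so only $I=\emptyset$ can satisfy $\#I+\#J_I\le n$, hence $\Gamma=\{\emptyset\}$ and the generic degree is exactly $D$), but as you say, it only motivates the target bound; it does not by itself imply the bound for a non-generic $\mathbf f$, since degree is not upper semicontinuous under specialization for sparse systems (that is the whole point of the theorem, cf. Example \ref{ejemplo2}).

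The real content of your proposal is the toric excess-intersection argument, and there is a genuine gap precisely where you flag it. Fulton--MacPherson gives a decomposition $[D_{Q_1}]\cdots[D_{Q_n}]=\sum_Z \alpha_Z$ into nonnegative contributions over the distinguished components $Z$ of $\bigcap_i V_X(f_i)$, and indeed $\sum_Z\deg\alpha_Z = D$. But the step ``the contribution of $\overline{V(\mathbf f)}$ ... dominates $[\overline{V(\mathbf f)}]\cdot(\pi^*H)^{\dim}$'' does not follow from $[D_{Q_i}]=\pi^*H+E_i$ with $E_i$ effective. Each $\alpha_Z$ is of the form $\{c(\oplus L_i)\cap s(Z,\bigcap_i V_X(f_i))\}_0$, and while the top-dimensional piece of the Segre class is $[Z]$ (times a positive multiplicity), the lower-dimensional pieces of $s(Z,\cdot)$ have no sign control, so after capping with Chern classes you cannot drop the $E_i$-terms and retain a lower bound; positivity of $\alpha_Z$ as a whole does not yield positivity term-by-term in the $\pi^*H$/$E_i$ expansion. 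Absent a substitute for this step, the argument is incomplete. The alternative ``conservation-of-number'' route via the family $\mathbf g^t$ has the analogous difficulty: one must show that at least $\deg Y$ of the $D$ torus solutions of $\mathbf g^t$ specialize (with multiplicity) onto each irreducible component $Y$ of $V(\mathbf f)$ as $t\to0$, and ruling out escape of mass to $X\setminus\C^n$ or onto the wrong strata is exactly the content that is missing.

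The paper avoids this entirely by a different compactification and a combinatorial induction. It maps $\C^n$ into $\P^n\times\P^{r_1}\times\cdots\times\P^{r_n}$ via $\varphi$ in \eqref{eq:varphi} (a Sturmfels-type incidence construction, not a toric fan), introduces multidegrees $\deg_{(r,0_k,1_{n-k})}$, and builds a recursive family $\mathcal X_{k,S}$ that at stage $k$ splits off the components already contained in $\{L_{k+1}=0\}$ from those that $\{L_{k+1}=0\}$ cuts properly. The key lemma (\ref{induccion}) replaces a linear form on the factor $\P^{r_{k+1}}$ by a linear form on $\P^n$; this replacement is degree-nonincreasing \emph{because} $\Delta\subset\cA_{k+1}\cup\Delta$ forces the coordinates of $\P^n$ to be present among those of $\P^{r_{k+1}}$, so the two cuts agree on $\mathcal X$. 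Summing over $S$ and telescoping from $k=n$ down to $k=0$ gives the bound with no recourse to Segre classes or excess normal bundles. So this is a genuinely different route: yours is a toric refined-B\'ezout strategy that could plausibly be completed with substantial additional multiplicity bookkeeping, while the paper's multiprojective multidegree induction makes the positivity manifest at each step and requires only elementary intersection counts.
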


Before proving the theorem, we will fix some notation and
definitions.

Let $r_j = \# (\cA_j \cup \Delta) - 1$ for $1 \le j \le n$. Consider
the morphism of varieties $\varphi: \C^n \to \mathbb{P}^n \times
\mathbb{P}^{r_1}\times \dots \times \mathbb{P}^{r_n}$ defined by
\begin{equation}\label{eq:varphi}
\varphi(x) = \left( (1:x), (x^a) _{a \in \cA_1 \cup \Delta}, \dots,
(x^a) _{a \in \cA_n \cup \Delta}\right).
\end{equation}
Let ${\mathcal X} =
\overline{\varphi(\C ^n)}$. For $1 \le j \le n$, we denote  by
$L_{j}$ the linear form in $\mathbb{P}^{r_{j}}$ given by the
coefficients of the polynomial $f_{j}$, that is to say, if $f_{j}=
\sum_{a \in \cA _j} c_{j,a} X^a$, then $L_{j} = \sum_{a \in \cA _j}
c_{j,a} X_{j,a}$.

For each integer $k$ ($0 \le k \le n$) and each subset $S \subset \{
1,\dots, k\}$  we define the variety ${\mathcal X}_{k, S}$
recursively in the following way:
 \begin{enumerate}
\item ${\mathcal X}_{0, \emptyset} = {\mathcal X}$.

\item  Provided ${\mathcal X}_{k, S}$ is defined for every $S \subset
\{ 1,\dots, k\}$, we define $\mathcal{X}_{k+1, T}$ with $T \subset \{ 1,\dots,
k+1\}$ as follows:
 \begin{itemize}
 \item If $k+1 \notin T$, $\mathcal{X}_{k+1, T}$ is the union of the irreducible components of $\mathcal{X}_{k, T}$
 included in $\{L_{k+1}=0\}$.
\item If $k+1 \in T$, $\mathcal{X}_{k+1, T}$ is  the intersection of $\{L_{k+1}=0\}$ with the union of the irreducible components of $\mathcal{X}_{k, T\setminus \{ k+1\}}$
not included in $\{L_{k+1}=0\}$.
 \end{itemize}
\end{enumerate}

Note that, from this definition, each ${\mathcal X}_{k, S}$ is an
equidimensional variety of dimension $n - \#S$. Moreover, if $\pi :
\mathbb{P}^n \times \mathbb{P}^{r_1}\times \dots \times
\mathbb{P}^{r_n} \to \mathbb{P}^n$ is the projection onto the first
factor, it is easy to see inductively that,  for every $1 \le k \le
n$,
$$   \bigcup _{S \subset \{1,\dots, k\}} \pi ({\mathcal X}_{k,
S}) = \overline{V(f_1, \dots, f_k)} \subset \mathbb{P}^n.$$

For an equidimensional subvariety $W$ of $\mathcal {X}$, we define
its multidegrees $\deg_{(r,0_k,1_{n-k})}(W)$, for $r,k\in \Z_{\ge
0}$ such that $n-k+r = \dim(W)$, as
$$\deg_{(r,0_k,1_{n-k})} (W) = \max \left\{ \# \Big(W\cap \bigcap_{j=1}^r \{\ell_{0,j} = 0\} \cap
\bigcap_{j = k+1}^n \{\ell_{j} = 0\} \Big)\right\}$$ where  the
maximum is taken over all $(\ell_{0,1}, \dots, \ell_{0,r},
\ell_{k+1}, \dots, \ell_{n})$ such that each $\ell_{0,j}$ is a
linear form in $\mathbb{P}^n$ and each $\ell_j$ is a linear form in
$\mathbb{P}^{r_j}$ and the intersection is finite. Note that the $1$
subscript indicates how many projective spaces are cut by a linear
form and the $0$ subscript shows how many remain uncut. As in the
case of the standard degree of affine or projective varieties
(see \cite{Hei83}), the maximum is attained generically.

In particular, it is clear that $ \deg_{(0,0_0,1_n)} ({\mathcal X})
= MV_n(\cA_1 \cup \Delta,\dots, \cA_n\cup \Delta)$.

\begin{lem}
\label{induccion} Under the previous assumptions, definitions and
notations,
$$\deg_{(k-\#S,0_k,1_{n-k})} ( {\mathcal X}_{k, S} )\ge \deg_{(k+1-\#S,0_{k+1},1_{n-k-1})} ({\mathcal X}_{k+1,
S}) + \deg_{(k-\#S,0_{k+1},1_{n-k-1})} ( {\mathcal X}_{k+1, S\cup
\{k+1\}})$$
\end{lem}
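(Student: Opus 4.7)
The plan is to decompose ${\mathcal X}_{k,S}={\mathcal X}_{k+1,S}\cup Y^{*}$, where $Y^{*}$ denotes the union of the irreducible components of ${\mathcal X}_{k,S}$ not included in $\{L_{k+1}=0\}$ (so that ${\mathcal X}_{k+1,S\cup\{k+1\}}=Y^{*}\cap\{L_{k+1}=0\}$), and to recover the LHS as a sum of two contributions, each matching one of the summands on the RHS. First, I would fix generic linear forms $\ell_{0,j}$ on $\P^{n}$ for $1\le j\le k-\#S$ and $\ell_{j}$ on $\P^{r_{j}}$ for $k+2\le j\le n$, chosen so that the intersection $B$ of the corresponding hyperplanes simultaneously attains the maxima in the definitions of the three multidegrees appearing in the statement. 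Then $C:={\mathcal X}_{k,S}\cap B$ is an equidimensional curve which splits as $C=C_{0}\cup C^{*}$, with $C_{0}={\mathcal X}_{k+1,S}\cap B\subseteq\{L_{k+1}=0\}$ and $C^{*}=Y^{*}\cap B$ having no component in $\{L_{k+1}=0\}$; taking a further generic linear form $\ell_{k+1}$ on $\P^{r_{k+1}}$, the LHS equals $\#(C_{0}\cap\{\ell_{k+1}=0\})+\#(C^{*}\cap\{\ell_{k+1}=0\})$.

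The $C^{*}$ term is the easy one: since $\ell_{k+1}$ and $L_{k+1}$ represent the same hyperplane class $H_{k+1}$ in $\P^{r_{k+1}}$ and no component of $C^{*}$ lies in $\{L_{k+1}=0\}$, moving $\ell_{k+1}$ to $L_{k+1}$ gives $\#(C^{*}\cap\{\ell_{k+1}=0\})\ge\#(C^{*}\cap\{L_{k+1}=0\})$, and by the genericity of $B$ the latter quantity equals $\deg_{(k-\#S,0_{k+1},1_{n-k-1})}({\mathcal X}_{k+1,S\cup\{k+1\}})$, producing the second summand on the RHS.

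The $C_{0}$ term is the real point, and here I would exploit the fact that $\Delta\subseteq\cA_{k+1}\cup\Delta$ to produce a factorization $\pi=q\circ p_{k+1}$ on ${\mathcal X}$, where $p_{k+1}:{\mathcal X}\to\P^{r_{k+1}}$ is the projection onto the $(k+1)$-st factor and $q:\P^{r_{k+1}}\dashrightarrow\P^{n}$ is the linear projection onto the coordinates indexed by $\Delta$. For each irreducible component $C$ of $C_{0}$ on which $p_{k+1}$ is generically finite, the projection formula combined with the multiplicativity $\deg(\pi|_{C})=\deg(p_{k+1}|_{C})\cdot\deg(q|_{p_{k+1}(C)})$ and the standard degree identity $\deg(q|_{p_{k+1}(C)})\cdot\deg(q(p_{k+1}(C)))=\deg(p_{k+1}(C))$ for the linear projection $q$ gives
$$\#(C\cap\{\ell_{k+1}=0\})=\deg(p_{k+1}|_{C})\cdot\deg(p_{k+1}(C))\ge\#(C\cap\{\ell''=0\})$$
for a generic linear form $\ell''$ on $\P^{n}$, with equality in the non-degenerate case in which $q|_{p_{k+1}(C)}$ is also generically finite (and both counts vanishing when $p_{k+1}|_{C}$ is constant, since then $\pi|_{C}=q\circ p_{k+1}|_{C}$ is constant as well). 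Summing over irreducible components of $C_{0}$, we obtain $\#(C_{0}\cap\{\ell_{k+1}=0\})\ge\deg_{(k+1-\#S,0_{k+1},1_{n-k-1})}({\mathcal X}_{k+1,S})$, which is the first summand on the RHS.

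The principal obstacle is the bookkeeping: one must verify that a single generic choice of the linear forms defining $B$ is simultaneously compatible with the three multidegree maxima, with the transversality of $C^{*}$ along $\{L_{k+1}=0\}$, and with the generic finiteness of $p_{k+1}$ on the components of $C_{0}$. Since each of these constraints defines a non-empty Zariski open subset of the parameter space of linear forms, a simultaneous generic choice exists, but the fact that the linear forms on $\P^{n}$ and on the $\P^{r_{j}}$'s play asymmetric roles means one has to be careful when reducing to a single $B$.
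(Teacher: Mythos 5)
Your proof is correct and rests on the same decomposition the paper uses: split ${\mathcal X}_{k,S}$ into the union ${\mathcal X}_{k+1,S}$ of its components inside $\{L_{k+1}=0\}$ and the union $Y^{*}$ of those not contained in $\{L_{k+1}=0\}$, and treat the two pieces separately, your handling of the $Y^{*}$ term (specialize the generic $\ell_{k+1}$ to the fixed, possibly non-generic $L_{k+1}$) being identical to the paper's. The one genuine difference is in the first term. You factor $\pi=q\circ p_{k+1}$ and appeal to the projection formula and the degree of the linear projection $q$, whereas the paper argues more directly at the level of hyperplane sections: since $\Delta\subset\cA_{k+1}\cup\Delta$, a linear form $\ell_0$ on $\P^n$ and the \emph{identical} form $\ell_{k+1}$ written in the $\Delta$-indexed coordinates of $\P^{r_{k+1}}$ cut out the same locus on ${\mathcal X}_{k+1,S}$, so trading one generic $\P^n$-form for a $\P^{r_{k+1}}$-form cannot decrease the count. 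Both arguments exploit exactly the same structural fact, namely that the $\P^n$-coordinates are replicated inside $\P^{r_{k+1}}$. Two small remarks. First, the ``standard degree identity'' $\deg(q|_{Z})\cdot\deg(q(Z))=\deg(Z)$ you invoke for the linear projection $q$ is only an inequality $\le$ in general (equality requires the center of projection to miss $Z$); fortunately the inequality is all your chain needs, and you correctly write the final comparison with $\ge$. Second, the paper sidesteps most of the bookkeeping you flag at the end by working entirely at the level of the multidegrees themselves, first using additivity of $\deg_{(k-\#S,0_k,1_{n-k})}$ over the decomposition ${\mathcal X}_{k,S}={\mathcal X}_{k+1,S}\cup Y^{*}$ and then comparing each summand to the corresponding right-hand term, rather than fixing a single generic linear section $B$ and passing to the curve $C$; the two formulations are equivalent, but the paper's avoids having to argue that one choice of $B$ serves all three maxima simultaneously.
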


\begin{proof} As the variety ${\mathcal X}_{k, S} = {\mathcal
X}_{k+1, S} \cup \widetilde{{\mathcal X}_{k, S}}$,  where
$\widetilde{{\mathcal X}_{k, S}}$ is the union of the irreducible
components of ${\mathcal{X}_{k, S}}$ not contained in $\{L_{k+1}= 0\}$, using
genericity in the definition of multidegrees, we have that
$$\deg_{(k-\#S,0_k,1_{n-k})} ({\mathcal X}_{k, S}) = \deg_{(k-\#S,0_{k},1_{n-k})} ({\mathcal X}_{k+1,
S}) + \deg_{(k-\#S,0_{k},1_{n-k})} (\widetilde{{\mathcal X}_{k,
S}}).$$

Note that, by adding the simplex $\Delta$ to the supports $\cA_1,\dots, \cA_n$, the points of the varieties
in $\mathbb{P}^n \times
\mathbb{P}^{r_1}\times \dots \times \mathbb{P}^{r_n}$ we
are considering have a copy of their coordinate in $\mathbb{P}^n$ in each coordinate in $\mathbb{P}^{r_j}$ for $j=1, \dots,n$ (see Equation (\ref{eq:varphi})).
Because of this, if $\ell_0$ is a linear form in
$\mathbb{P}^n$ and $\ell_{k+1}$ is exactly the same linear form
involving only the corresponding coordinates in $\mathbb{P}^{r_{k+1}}$, we
have that ${\mathcal X}_{k+1, S} \cap \{\ell_{0}= 0\} = {\mathcal
X}_{k+1, S} \cap \{\ell_{k+1}= 0\}$ and therefore,
$$\deg_{(k-\#S,0_{k},1_{n-k})} ({\mathcal X}_{k+1, S}) \ge
\deg_{(k+1-\#S,0_{k+1},1_{n-k-1})} ({\mathcal X}_{k+1, S})$$ and
$$\deg_{(k-\#S,0_{k},1_{n-k})}
(\widetilde{{\mathcal X}_{k, S}}) \ge
\deg_{(k-\#S,0_{k+1},1_{n-k-1})} \left(\widetilde{{\mathcal X}_{k,
S}} \cap \{L_{k+1} = 0\}\right) =$$  $$
=\deg_{(k-\#S,0_{k+1},1_{n-k-1})} ({\mathcal X}_{k+1, S\cup
\{k+1\}}).$$
\end{proof}
Now, we can prove Theorem \ref{bounddegree}:
\begin{proof}
Consider the projection $\pi : \mathbb{P}^n \times
\mathbb{P}^{r_1}\times \dots \times \mathbb{P}^{r_n} \to
\mathbb{P}^n$ onto the first factor. As
$$   \bigcup _{S \subset \{1,\dots, n\}} \pi ({\mathcal X}_{n,
S}) = \overline{V(\mathbf{f})} \subset \mathbb{P}^n,$$ we have that
$$\deg (V(\mathbf{f})) = \deg \Big( \bigcup _{S \subset \{1,\dots,
n\}} \pi ( {\mathcal X}_{n, S}) \Big) \le \sum_{S \subset \{1,\dots,
n\}}\deg \pi ( {\mathcal X}_{n, S}) = $$ $$= \sum_{S \subset
\{1,\dots, n\}}\deg_{(n-\# S, 0_n, 1_0)} ({\mathcal X}_{n, S}) $$
(this last equality is nothing but our definition of multidegree).

Applying inductively Lemma \ref{induccion}, we get that, for each $0
\le k \le n$,

$$\sum_{S \subset \{1, \dots, k \}} \deg_{(k-\#S, 0_k, 1_{n-k})} ({\mathcal X}_{k, S} ) \le
\deg_{(0,0_0,1_n)} {\mathcal X}_{0, \emptyset}$$ and therefore we
obtain that

$$\sum_{S \subset \{1,\dots, n\}}\deg_{(n-\# S, 0_n, 1_0)} ({\mathcal X}_{n,
S}) \le \deg_{(0,0_0,1_n)} ({\mathcal X}_{0, \emptyset}) =
MV_n(\cA_1 \cup \Delta,\dots, \cA_n\cup \Delta).$$
\end{proof}

In the following examples, we show that this bound may be attained:
\begin{exmp}{\rm
Let $S \subsetneq \{1, \dots, n\}$ and let $f_1,\dots, f_n \in
\Q[X_1,\dots, X_n]$ be polynomials of degree $d$ in the variables
$(X_i)_{i\in S}$ with no zero coefficients (that is to say, the
monomials appearing in $f_1,\dots, f_n$ are those of degree less or
equal to $d$ in the variables $(X_i)_{i\in S}$). If $\cA$ is their
common support, it is evident that $MV_n(\cA^{(n)})= 0$ and that
$SM_n(\cA^{(n)})= 0$. However, $MV_n( (\cA \cup \Delta)^{(n)}) =
d^{\# S}$ and this degree can be attained for special choices of the
polynomials: If $f_1, \dots, f_{\# S}\in \Q [(X_i)_{i \in  S}]$ is a
family with $d^{\# S}$ isolated solutions in $\C^{\# S}$, take
linear combinations $f_{\# S+1}, \dots, f_n $  of $f_1,
\dots, f_{\# S}$. Then, the family of polynomials $f_1, \dots, f_{\#
S},f_{\# S+1}, \dots, f_n \in \Q[X_1,\dots, X_n]$ defines a variety
formed by $d^{\# S}$ affine linear spaces of dimension ${n-{\# S}}$.}
\end{exmp}

Our bound can be also attained for generic systems:
\begin{exmp} {\rm Consider the family of generic polynomials $f_1, \dots, f_n$  defined in Example
\ref{example7} and let their supports  $\cA = (\cA_1, \dots,
\cA_n)$. Then, the affine variety they define consists of $2^n$
points, and therefore, its degree is $2^n = MV_n (\cA_1\cup \Delta,
\dots, \cA_n\cup \Delta)$.}
\end{exmp}

\subsection{An algorithm in the non-generic case}\label{sec:algnongeneric}

In the sequel we will describe an algorithm that, given an
\emph{arbitrary} square system of sparse polynomials, provides a finite set
of points in each irreducible component of the affine variety
the system defines. The complexity of this algorithm depends on the
bound for the degree of the variety obtained in the previous
section.

Let $\mathbf{f} = (f_1, \dots, f_n)$ be $n$ polynomials in
$\Q[X_1,\dots, X_n]$ supported on $\cA= (\cA_1,\dots, \cA_n)$ and,
for a fixed $k $, $1 \le k \le n-1$, let $L_1, \dots, L_k$ be
generic affine linear forms in  $\Q[X_1,\dots, X_n]$. Then,
 if $V_k(\mathbf{f})$ is the equidimensional component of dimension $k$ of $V(\mathbf{f})$,
 the isolated common zeroes  of $f_1, \dots, f_n, L_1, \dots, L_k$ are
 $\deg (V_k(\mathbf{f}))$ points in $V_k(\mathbf{f})$. The idea is
 to represent each equidimensional component by means of the corresponding set
 of points  (c.f. the notion of witness point set in \cite{SW05}).

\begin{exmp}{\rm
Consider the following polynomial system:
$$\mathbf{f}= \left\{ \begin{array}{l}
 X_1^3 X_2X_3-X_1 X_2 X_3^3-X_1^2+X_3^2=(X_1X_2X_3-1)
 (X_1-X_3)(X_1+X_3)\\[2mm]
X_1^2X_2^2X_3-X_1^2X_2X_3-X_1X_2^2X_3^2+X_1X_2X_3^2-X_1X_2+X_1+X_3X_2-X_3={}\\
\quad {}= (X_1X_2X_3-1) (X_1-X_3)(X_2-1)\\[2mm]
X_1X_2^3X_3-X_1X_2X_3^2-X_2^2+X_3 =(X_1X_2X_3-1)(X_2^2-X_3)
\end{array}\right.$$
The equidimensional components of $V(\mathbf{f})$ are
$$V_0(\mathbf{f})=\{ (-1,1,1)\},\quad
V_1(\mathbf{f})=\{ x_1-x_3 =0, x_2^2-x_3=0\}, \quad
V_2(\mathbf{f})=\{ x_1x_2x_3 -1=0\}.$$ Taking $L_1= X_1 - X_2$ and
$L_2 =6 X_2-X_3+7$, we have:
\begin{itemize}
\item  the set of isolated points of $V(\mathbf{f})\cap
\{x_1-x_2=0\}$ is $\{(1,1,1),(0,0,0) \}$, which is a set with
$2=\deg(V_1(\mathbf{f}))$ points in $V_1(\mathbf{f})$.
\item $V(\mathbf{f})\cap \{x_1-x_2=0,6 x_2-x_3+7=0\}=
\{(-1,-1,1),(-\frac{1}{2},-\frac{1}{2},4),(\frac{1}{3},\frac{1}{3},9)
\}$, which is a set with $3=\deg(V_2(\mathbf{f}))$ points in
$V_2(\mathbf{f})$.
\end{itemize}}
\end{exmp}

For a fixed $k$, $0\le k \le n-1$, in order to compute the isolated
common zeroes of $f_1, \dots, f_n, L_1, \dots, L_k$, by taking $n$
generic linear combinations of these polynomials,
 we obtain a system of $n$ polynomials in $n$ variables having these points among its
 isolated zeroes (see \cite{Hei83}). Note that, in order to achieve this, it suffices to take linear combinations of the form
$$ f_i(X) + \sum_{j=1}^k b_{ij}L_j(X), \qquad i=1,\dots, n$$
for generic $b_{ij}$ $(1\le i \le n, 1\le j \le k)$.

Procedure \texttt{PointsInEquidComps} described below computes a
family of $n$ geometric resolutions $R^{(k)}$, for $0\le k \le n-1$,
encoding a finite set of points and such that $R^{(k)}$ represents
at least $\deg (W)$ points in each irreducible component $W$ of
dimension $k$ of $V(\mathbf{f})$.

The intermediate subroutine \texttt{CleanGR} takes as input a
geometric resolution $(q(u),$ $v_1(u),\dots,
v_n(u))\subset(\Q[u])^{n+1}$ of a finite set of points
$\mathcal{P}\subset \C^n$ and a list
$\mathbf{f}=(f_1,\dots, f_n)$ of polynomials in $\Q[X_1,\dots, X_n]$, and computes a
geometric resolution $(Q(u), V_1(u),\dots, V_n(u))$ of $\mathcal{P}
\cap V(\mathbf{f})$:
\begin{eqnarray*}
Q(u) &=& \gcd(q(u), f_1(v_1(u), \dots, v_n(u)), \dots, f_n(v_1(u), \dots, v_n(u)))\\
 V_i(u) &=& v_i(u)\ \textrm{mod}\, Q(u), \quad i=1,\dots, n.
 \end{eqnarray*}
Let $\cA_j$ be the support of $f_j$, $d$  an upper bound for
$\deg(f_j)$, $j=1,\dots, n$, and $D=\deg q$. First, the subroutine
computes slp's of length $O(n D \log D)$ for the polynomials $v_i$,
$i=1,\dots, n$. The gcd $Q(u)$ is computed successively as follows:
 For $j=1,\dots, n$, the subroutine computes an slp of length $\mathcal{L}_j= O(n\log d \#\cA_j)$ for the polynomial $f_j$ and, by multipoint evaluation and interpolation, the dense representation of $F_j(u) = f_j(v_1(u), \dots, v_n(u))$ within complexity  $O(M(dD)(\mathcal{L}_j + nD\log D))$; then, it computes \linebreak
$Q_j(u):=\gcd(Q_{j-1}, F_j(u))$ within $O(M(dD))$ additional
operations. Finally, the polynomials $V_i(u)$  for $i=1,\dots, n$
are obtained within complexity $O(nM(D))$. The overall complexity of
the procedure is of order $O(M(dD)(n\log d \sum_{j=1}^n \# \cA_j +
n^2 D \log D))$.

\bigskip

\noindent\hrulefill

\medskip

\noindent \textbf{Algorithm} \texttt{PointsInEquidComps}

\medskip
\noindent INPUT: A sparse representation of a system $\mathbf{f}
=(f_1,\dots, f_n)$ of polynomials in $\Q[X_1,\dots, X_n]$ supported
on $\cA = (\cA_1, \dots, \cA_n)$, a lifting function $\omega
=(\omega_1, \dots, \omega_n)$ for $\cA_\Delta = (\cA_1\cup \Delta,
\dots, \cA_n \cup \Delta)$ and the mixed cells in the induced
subdivision of $\cA_\Delta$.

\smallskip
\begin{enumerate}
\item \label{alg2:generico} Choose randomly coefficients for a polynomial system $\mathbf{g}= (g_1, \dots, g_n)$  supported on $\cA_\Delta$.
\item \label{alg2:solvegeneric} Apply the algorithm in \cite[Section 5]{JMSW09} to $\mathbf{g}$ to obtain a geometric resolution $R_\mathbf{g}$ of its zeroes in $\C^n$.
\item \label{alg2:chooseL} Choose randomly $n-1$ affine linear forms $L_1, \dots, L_{n-1}$ in the variables $X =(X_1,\dots,X_n )$ and $n (n-1)$ integer numbers $(b_{ij})_{1\le i \le n, 1 \le j \le n-1}$.
\item \label{alg2:recursion} For $k = 0, \dots, n-1$:
\begin{enumerate}
\item Obtain the sparse representation of the polynomials $h_i^{(k)} (X) = f_i(X) + \sum_{j=1}^k b_{ij}L_j(X)$ for $1 \le i \le n$.
\item \label{alg2:recursionGR} Apply the algorithm in  \cite[Section 6]{JMSW09} to $\textbf{h}^{(k)}= (h_1^{(k)}, \dots, h_n^{(k)})$  to obtain from $R_{\mathbf{g}}$ a geometric
resolution  of a finite set $\mathcal{P}_k$ which contains the
isolated common zeroes of $\textbf{h}^{(k)}$ in $\C^{n}$.
\item \label{alg2:clean} Apply subroutine \texttt{CleanGR} to the previous geometric resolution and $\mathbf{f}$ to obtain a geometric resolution
$R^{(k)} $ of  $\mathcal{P}_k \cap V(\mathbf{f})$.
\end{enumerate}
\end{enumerate}

\smallskip
OUTPUT: The $n$ geometric resolutions $R^{(k)}$ for $0 \le k \le
n-1$.

\medskip

\noindent\hrulefill

\bigskip

In the sequel we will  estimate the complexity of this procedure.

Steps \ref{alg2:generico} and \ref{alg2:chooseL} are fulfilled by
taking a random choice of numbers. We will not consider the cost of
this random choice in the overall complexity (see Remark \ref{rem:prob}). The complexity of Step
\ref{alg2:solvegeneric} is
 $$ O( (n^3N_\Delta \log (d) +
n^{1+\Omega})M(D_\Delta)M(\mathfrak{M}_\Delta)(M(D_\Delta)+M(E_\Delta)))$$
where
\begin{itemize}\itemsep=0pt
\item $N_\Delta:=\sum_{j=1}^n\#(\cA_j \cup \Delta)$;
\item $d:= \max_{1 \le j \le n} \{\deg f_j\}$;
\item $D_\Delta:=MV_{n}(\cA_1 \cup \Delta, \dots, \cA_n \cup
\Delta)$;
\item $\mathfrak{M}_\Delta:=\max\{\|\mu\|\}$ where the maximum ranges over all primitive    normal vectors to the   mixed cells in
the fine mixed subdivision of $\cA_\Delta$ induced by $\omega$;
\item $E_\Delta:=MV_{n+1}(\Delta \times \{0\},(\cA_1 \cup \Delta) (\omega_1), \dots, (\cA_n \cup \Delta) (\omega_n))$
where $(\cA_j \cup \Delta) (\omega_j)$ for every  $ 1 \le j \le n$
 is the set $\cA_j \cup \Delta$   lifted by  $\omega$.
 \end{itemize}
In Step \ref{alg2:recursionGR}, we compute a finite set which
includes the affine isolated zeroes of the system
$\textbf{h}^{(k)}$.  By applying the result in \cite[Proposition
6.1]{JMSW09}, we have that the complexity of  this step is bounded
by
 $ O((n^2N_\Delta\log d +
n^{1+\Omega})M(D_\Delta)M(E'_\Delta))$ where
 $E'_\Delta:=MV_{n+1}(\{0\}\times \Delta, \{0,1\}\times (\cA_1 \cup \Delta),  \dots,\{0,1\}\times (\cA_n \cup \Delta))$.
Finally, the complexity of Step \ref{alg2:clean} is of order
$O(M(dD_\Delta)(n\log d \sum_{j=1}^n \# \cA_j + n^2 D_\Delta \log
D_\Delta))$.

Note that the parameters $E_\Delta$ and $E'_\Delta$ in the previous
complexities can be bounded as follows:
$$E'_\Delta = \sum_{j=1}^n MV_n(\Delta, \cA_1\cup \Delta, \dots, \widehat{\cA_{j}\cup \Delta}, \dots, \cA_n \cup \Delta)\le n D_\Delta$$
and, if $\omega_{\max}:= \max_{j,a}\{ \omega_j(a) \mid 1\le j \le
n,\ a\in \cA_j\cup \Delta\}$,
$$ E_\Delta \le MV_{n+1}(\Delta \times \{0\},(\cA_1 \cup \Delta)\times \{0, \omega_{\max}\}, \dots, (\cA_n \cup \Delta) \times \{0, \omega_{\max}\}) \le \omega_{\max} n D_\Delta.$$
Taking into account these bounds, we have:

\begin{thm}\label{thm:nongeneric}
\noindent Let $\mathbf{f} = (f_1, \dots, f_n)$ be $n$ polynomials in
$\Q[X_1,\dots, X_n]$ supported on $\cA= (\cA_1,\dots, \cA_n)$.
\texttt{PointsInEquidComps} is a probabilistic algorithm which
computes a family of $n$ geometric resolutions $(R^{(0)}, R^{(1)},
\dots, R^{(n-1)})$ such that, for every $0\le k \le n-1$, $R^{(k)}$
represents a finite set containing $\deg V_k(\mathbf{f})$ points in
the equidimensional component $V_k(\mathbf{f})$ of dimension $k$ of
$V(\mathbf{f})$. Using the previous notation, the complexity of the
algorithm is of order $$O(\omega_{\max} n^4 N_\Delta \log(d)
M(dD_\Delta) M(D_\Delta) M(\mathfrak{M}_\Delta) ).$$
\end{thm}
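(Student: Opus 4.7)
The plan is to establish correctness by tracking the \emph{witness points} of each equidimensional component through the deformation, and then sum up the cost of each step.

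For correctness, fix $k$ with $0\le k\le n-1$. By definition of degree, for generic affine linear forms $L_1,\dots,L_k\in \Q[X_1,\dots,X_n]$ the intersection $V_k(\mathbf{f})\cap V(L_1,\dots,L_k)$ consists of exactly $\deg V_k(\mathbf{f})$ points, which are isolated common zeros of the overdetermined system $f_1,\dots,f_n,L_1,\dots,L_k$. Taking $n$ generic linear combinations produces the square system $\mathbf{h}^{(k)}=(h_1^{(k)},\dots,h_n^{(k)})$ with $h_i^{(k)}=f_i+\sum_{j=1}^k b_{ij}L_j$, whose supports are contained in $\cA_i\cup\Delta$. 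A standard argument (see \cite{Hei83}) shows that, for generic $b_{ij}$, the isolated zero set of $\mathbf{h}^{(k)}$ in $\C^n$ contains all $\deg V_k(\mathbf{f})$ witness points.

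Step \ref{alg2:solvegeneric} computes, via \cite[Section 5]{JMSW09}, a geometric resolution $R_\mathbf{g}$ of the $D_\Delta$ zeros of the generic system $\mathbf{g}$ supported on $\cA_\Delta$, which serves as starting point for the homotopy in Step \ref{alg2:recursionGR}. The algorithm of \cite[Section 6]{JMSW09} then deforms $\mathbf{g}$ into $\mathbf{h}^{(k)}$ and outputs a geometric resolution of a finite set $\mathcal{P}_k\subset \C^n$ that contains the isolated affine zeros of $\mathbf{h}^{(k)}$ (the relevant degree bound being $D_\Delta\ge \deg V(\mathbf{h}^{(k)})$ supplied by Theorem \ref{bounddegree}). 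Step \ref{alg2:clean} keeps only those points of $\mathcal{P}_k$ that lie in $V(\mathbf{f})$; by the identity $h_i^{(k)}=f_i+\sum_j b_{ij}L_j$, a point $x\in \mathcal{P}_k\cap V(\mathbf{f})$ satisfies $\sum_j b_{ij}L_j(x)=0$ for every $i$, so for generic $b_{ij}$ (using $k\le n$) it must satisfy $L_j(x)=0$ for all $j$, and conversely every witness point in $V_k(\mathbf{f})\cap V(L_1,\dots,L_k)$ lies in this set. Therefore $R^{(k)}$ represents a finite subset of $V(\mathbf{f})$ containing all $\deg V_k(\mathbf{f})$ witness points of $V_k(\mathbf{f})$, proving correctness.

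For the complexity bound, we add up the previously recorded costs of Steps \ref{alg2:solvegeneric}, \ref{alg2:recursionGR} and \ref{alg2:clean} over the $n$ values of $k$, and simplify using the estimates $E'_\Delta\le nD_\Delta$ and $E_\Delta\le \omega_{\max}\,nD_\Delta$ together with $\sum_j\#\cA_j\le N_\Delta$. The leading term comes from the $n$ calls to the Newton-Hensel based solver of \cite[Section 6]{JMSW09} together with the cleaning steps, both of which scale as $\widetilde{O}(n^{O(1)}N_\Delta \log d \cdot D_\Delta^{O(1)})$. Merging and retaining only the dominant factor yields $O(\omega_{\max}\, n^4 N_\Delta \log(d)\, M(dD_\Delta)\, M(D_\Delta)\, M(\mathfrak{M}_\Delta))$, matching the claim.

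The main obstacle is the genericity argument that simultaneously (i) ensures each witness point of $V_k(\mathbf{f})\cap V(L_1,\dots,L_k)$ appears as an \emph{isolated} zero of $\mathbf{h}^{(k)}$ (so it is captured by $\mathcal{P}_k$ rather than absorbed by a positive-dimensional component) and (ii) guarantees that the cleaning step does not introduce spurious points outside $V(\mathbf{f})\cap V(L_1,\dots,L_k)$. Both conditions translate into the nonvanishing of certain polynomials in the parameters $(L_j,b_{ij})$, so they hold outside a proper Zariski closed set; quantitative bounds on the degrees of these exceptional loci then give, via Schwartz-Zippel, the explicit error-probability control deferred to Remark \ref{rem:prob}.
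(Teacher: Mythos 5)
Your argument follows the same route as the paper: witness points of $V_k(\mathbf{f})$ are obtained as isolated zeros of $f_1,\dots,f_n,L_1,\dots,L_k$, captured via $n$ generic linear combinations $h_i^{(k)}=f_i+\sum_j b_{ij}L_j$ (invoking \cite{Hei83}), solved by the polyhedral deformation of \cite{JMSW09} starting from the generic system $\mathbf{g}$ supported on $\cA_\Delta$, and then restricted to $V(\mathbf{f})$ by \texttt{CleanGR}; the complexity is the sum of the recorded step costs using $E'_\Delta\le nD_\Delta$ and $E_\Delta\le \omega_{\max}nD_\Delta$. That is precisely how the paper argues, so the overall structure and conclusion are correct.

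However, one specific inference in your correctness paragraph is wrong. You claim that any $x\in\mathcal{P}_k\cap V(\mathbf{f})$ satisfies $\sum_j b_{ij}L_j(x)=0$ for every $i$, hence lies in $V(L_1,\dots,L_k)$, so that the cleaning step produces exactly $V(\mathbf{f})\cap V(L_1,\dots,L_k)$ and ``does not introduce spurious points outside $V(\mathbf{f})\cap V(L_1,\dots,L_k)$.'' This does not follow: $\mathcal{P}_k$ is only guaranteed to \emph{contain} the isolated zeros of $\mathbf{h}^{(k)}$, and its extra points need not be zeros of $\mathbf{h}^{(k)}$ at all (they may come from path endpoints that land on positive-dimensional components of $V(\mathbf{h}^{(k)})$ or diverge). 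For such a spurious $x$ with $x\in V(\mathbf{f})$ but $h_i^{(k)}(x)\ne 0$ for some $i$, one has $\sum_j b_{ij}L_j(x)\ne 0$, so the rank argument on $(b_{ij})$ is inapplicable and $x$ need not lie in $V(L_1,\dots,L_k)$. Fortunately this error is not load-bearing: the theorem only asserts that $R^{(k)}$ represents a finite subset of $V(\mathbf{f})$ \emph{containing} $\deg V_k(\mathbf{f})$ points of $V_k(\mathbf{f})$, and extra points of $V(\mathbf{f})$ in the output are explicitly allowed (as the paper itself illustrates with $\mathcal{W}_0\ne\emptyset$ in the worked example even though $V_0(\mathbf{f})=\emptyset$). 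Drop the ``converse'' characterization of the cleaned set and the proof stands; only the containment direction (every witness point is an isolated zero of $\mathbf{h}^{(k)}$, hence in $\mathcal{P}_k$, hence retained by \texttt{CleanGR}) is needed, and that part you argue correctly.
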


\begin{exmp}{\rm
Consider the polynomial system introduced in Example \ref{ejemplo2},
given by the polynomials
$$\mathbf{f}= \begin{cases}  f_1= X_1X_2-X_1-X_2+1 \\
f_2=X_1X_3-X_1-X_3+1 \\ f_3=X_2X_3-X_2-X_3+1
\end{cases}$$
supported on $\cA_1 = \{(1,1,0), (1,0,0), (0,1,0), (0,0,0)\}$,
$\cA_2 = \{(1,0,1), (1,0,0), (0,0,1), (0,0,0)\}$ and $\cA_3 =
\{(0,1,1), (0,1,0), (0,0,1), (0,0,0)\}$ respectively.

 Algorithm \texttt{PointsInEquidComps} first chooses (at random) a system supported on $(\cA_1\cup \Delta, \cA_2\cup \Delta, \cA_3\cup \Delta)$,
 for example:
$$\mathbf{g}= \begin{cases}   2X_1X_2-2X_1+X_2-X_3+1 \\
X_1X_3-X_1+2 X_2+2X_3+2 \\ X_2X_3+X_1-2 X_2+X_3-1
\end{cases}$$
and computes a geometric resolution $R_{\mathbf{g}}$ of its isolated
common roots in $\C^3$:
$$R_{\mathbf{g}} = \left\{\begin{array}{l}
u^5-\frac{9}{2}u^4-17u^3+80u^2-2u-\frac{155}{2}=0\\[2mm]
X_1=-\frac{9}{100}u^4+\frac{7}{40}u^3+\frac{351}{200}u^2-\frac{543}{200}u-\frac{137}{40}\\[2mm]
X_2=-\frac{1}{200}u^4-\frac{1}{80}u^3-\frac{1}{400}u^2+\frac{233}{400}u+\frac{7}{80}\\[2mm]
X_3=-\frac{1}{10}u^4+\frac{3}{20}u^3+\frac{7}{4}u^2-\frac{51}{20}u-\frac{13}{4}
\end{array}\right.$$
Then, in Step \ref{alg2:chooseL}, the algorithm takes $2$ linear
forms:
\begin{eqnarray*}
L_1&=& X_1 + X_2 + 2 X_3\\
L_2&=& X_1 + 2 X_2
\end{eqnarray*}
In Step \ref{alg2:recursion}, for $k=0, 1, 2$, the isolated roots of
the system $\mathbf{h}^{(k)}$ obtained by adding to $\mathbf{f}$
generic linear combinations of $L_i$, $i=0,\dots, k$, are computed:
$$\mathbf{h}^{(0)} = \mathbf{f}, \quad
\mathbf{h}^{(1)}= \left\{ \begin{array}{lcl}
f_1(X) + L_1(X) &=& X_1 X_2 + 2 X_3 +1\\
f_2(X) - L_1(X) &=& X_1X_3 - 2 X_1 - 3 X_3 - X_2+1\\
f_3(X) + 2 L_1(X)&=& X_2X_3+ X_2 + 3 X_3 + 2 X_1 +1
\end{array}\right.$$
$$\mathbf{h}^{(2)} = \left\{ \begin{array}{lcl}
f_1(X) + L_1(X) +L_2(X)&=& X_1 X_2 +X_1+2X_2 + 2 X_3+1\\
f_2(X) - L_1(X) +2L_2(X)&=& X_1X_3 +3 X_2 - 3 X_3 +1\\
f_3(X) + 2 L_1(X)+L_2(X)&=& X_2X_3+3 X_2 + 3 X_3 + 3 X_1 +1
\end{array}\right.$$
In order to do this, the algorithm deforms the geometric resolution
$R_{\mathbf{g}}$ to  geometric resolutions $R_{\mathbf{h}^{(k)}}$ of
the sets of isolated roots of $\mathbf{h}^{(k)}$:
$$R_{\mathbf{h}^{(0)}} = \left\{ \begin{array}{l}
u^3-7u^2+2u+40=0\\[1mm]
X_1= 1\\[1mm]
X_2=-\frac{1}{14}u^2+\frac{9}{14}u-\frac{3}{7}\\[1mm]
X_3 =\frac{1}{7}u^2 -\frac{2}{7}u-\frac{1}{7}
\end{array}
\right.
$$

$$R_{\mathbf{h}^{(1)}} = \left\{ \begin{array}{l}
u^5- \frac92 u^4 -13 u^3 +68 u^2-64 u=0\\[1mm]
X_1= \frac{57}{200}u^4-\frac{369}{400}u^3-\frac{953}{200}u^2+\frac{647}{50}u-3\\[2mm]
X_2=-\frac{269}{600}u^4+\frac{1573}{1200}u^3+\frac{1567}{200}u^2-\frac{2599}{150}u+1\\[2mm]
X_3 =
\frac{367}{600}u^4-\frac{2039}{1200}u^3-\frac{2181}{200}u^2+\frac{3407}{150}u+1
\end{array}
\right.
$$

$$R_{\mathbf{h}^{(2)}} = \left\{ \begin{array}{l}
u^5+\frac{49}{2}u^4-\frac{1549}{9}u^3+\frac{538}{9}u^2+\frac{679}{6}u-\frac{769}{18}=0\\[1mm]
X_1=
-\frac{101214}{1803049}u^4-\frac{2537721}{1803049}u^3+\frac{15987545}{1803049}u^2+\frac{3986650}{1803049}u
-\frac{7719426}{1803049}\\[2mm]
X_2=\frac{58338}{9015245}u^4+\frac{277533}{1803049}u^3-\frac{11347628}{9015245}u^2+\frac{5343077}{9015245}u
+\frac{8036523}{9015245}\\[2mm]
X_3 =
\frac{389394}{9015245}u^4+\frac{1982655}{1803049}u^3-\frac{57242469}{9015245}u^2-\frac{21604159}{9015245}u+\frac{22524084}{9015245}
\end{array}
\right.
$$
Finally, subroutine \texttt{CleanGR} removes spurious factors from
$R_{\mathbf{h}^{(k)}}$ to obtain a geometric resolution $R^{(k)}$ of
a finite set containing a set of representative points of the
equidimensional component of dimension $k$
$$R^{(0)} = \left\{ \begin{array}{l}
u^3-7u^2+2u+40=0\\[1mm]
X_1= 1\\[1mm]
X_2=-\frac{1}{14}u^2+\frac{9}{14}u-\frac{3}{7}\\[1mm]
X_3 =\frac{1}{7}u^2 -\frac{2}{7}u-\frac{1}{7}
\end{array}
\right.\quad
 R^{(1)}= \left\{ \begin{array}{l}
u^3+2u^2-8u=0\\[1mm]
X_1=\frac{1}{2}u^2+u-3 \\[1mm]
X_2=-\frac{1}{6}u^2+\frac{1}{3}u+1\\[1mm]
X_3 = -\frac{1}{6}u^2-\frac{2}{3}u+1
\end{array}
\right. \quad R^{(2)} = \emptyset
$$
Since $u^3-7u^2+2u+40= (u+2)(u-4)(u-5)$ and $u^3+2u^2-8u =
(u+4)u(u-2)$, substituting their roots in $R^{(0)}$ and $R^{(1)}$
respectively, we get the following sets of points in
$V(\mathbf{f})$:
$$\mathcal{W}_0=\{(1,-2,1), (1,1,1), (1,1,2)\}\qquad \mathcal{W}_1=\{(-3,1,1),(1,1,-1),(1,-3,1)\}.$$
Note that $\mathcal{W}_1$ contains exactly one representative point
for each of the $3$ lines in $V(\mathbf{f})$. Moreover, the fact
that $R^{(2)} = \emptyset$ implies that $V(\mathbf{f})$ does not
have irreducible components of dimension $2$. However,  although
there are no isolated points in $V(\mathbf{f})$, $\mathcal{W}_0
\subset V(\mathbf{f})$ is not empty.}
\end{exmp}

\begin{rem}\label{rem:prob}{\rm
All the random choices of points made by our algorithms lead to correct computations provided
these points do not annihilate certain polynomials whose degrees can be explicitly bounded.
These bounds depend polynomially on the degrees of affine varieties associated to the input
systems, which in turn, can be estimated in terms of mixed volumes due to Theorem \ref{bounddegree}.
The Scwhartz-Zippel lemma allows us to control the bit size of the constants to be chosen at random
in order that the error probability of the algorithms is less than a fixed number within
the stated complexity bounds.

Although we do not include the precise probability estimates here, for a similar analysis we refer
the reader to \cite{JMSW09}, where the genericity of zero-dimensional sparse systems is studied
and the probability of success of the algorithms to compute isolated solutions of sparse systems is stated in detail. We also refer the reader to \cite[Proposition 4.5]{KPS01} for bounds on the genericity of hyperplanes intersecting an equidimensional variety in as many points as its degree, and to \cite[Lemma 3 and Remark 4]{JS02} for the analysis of the genericity of linear combinations of
input equations required in Section \ref{sec:algnongeneric}.}
\end{rem}

\bigskip

\noindent \textbf{Acknowledgements.} The authors would like to thank the referees for their helpful comments.

\end{document}